\numberwithin{equation}{section}
\theoremstyle{plain}
\newtheorem{lemma}{Lemma}[section]
\newtheorem{theorem}[lemma]{Theorem}
\newtheorem{corollary}[lemma]{Corollary}
\theoremstyle{definition}
\newtheorem{definition}[lemma]{Definition}
\newtheorem{example}[lemma]{Example}
\newtheorem*{ack}{Acknowledgements}
\theoremstyle{remark} 
\newtheorem{remark}[lemma]{Remark} 
\newtheorem*{claim}{Claim} 
\newcommand{\Aut}{\operatorname{Aut}}
\newcommand{\cosupp}{\operatorname{cosupp}}
\newcommand{\End}{\operatorname{End}}
\newcommand{\Ext}{\operatorname{Ext}}
\newcommand{\Hom}{\operatorname{Hom}}
\newcommand{\id}{\operatorname{id}}
\newcommand{\Ker}{\operatorname{Ker}}
\newcommand{\Mod}{\operatorname{\mathsf{Mod}}}
\newcommand{\Proj}{\operatorname{Proj}}
\newcommand{\sHom}{\underline{\Hom}}
\newcommand{\StMod}{\operatorname{\mathsf{StMod}}}
\newcommand{\stmod}{\operatorname{\mathsf{stmod}}}
\newcommand{\supp}{\operatorname{supp}}
\newcommand{\pisupp}{\pi\text{-}\supp}
\newcommand{\picosupp}{\pi\text{-}\cosupp}
\newcommand{\col}{\colon}
\newcommand{\ges}{{\scriptscriptstyle\geqslant}}
\newcommand{\da}{{\downarrow}}
\newcommand{\lra}{\longrightarrow}
\newcommand{\Iff}{\Longleftrightarrow}
\newcommand{\xra}{\xrightarrow}
\newcommand{\bik}{Benson/Iyengar/Krause}
\def\mcV{\mathcal{V}}
\def\sfC{\mathsf C}
\def\bbG{\mathbb G}
\def\bbP{\mathbb P}
\def\bbZ{\mathbb Z}
\newcommand{\fp}{\mathfrak{p}}
\newcommand{\fq}{\mathfrak{q}}
\newcommand{\eps}{\varepsilon}
\newcommand{\gam}{\varGamma}
\def\Ga1{\operatorname{\mathbb G_{a(1)}}\nolimits}
\title[Stratification and $\pi$-cosupport: Finite groups]{Stratification and
  $\pi$-cosupport: Finite groups}
\author[Benson, Iyengar, Krause, and Pevtsova]{Dave Benson, Srikanth B. Iyengar, Henning Krause \\ and Julia Pevtsova}
\address{Dave Benson \\ 
Institute of Mathematics\\ 
University of Aberdeen\\ 
King's College\\ 
Aberdeen AB24 3UE\\ 
Scotland U.K.}
\address{Srikanth B. Iyengar\\ 
Department of Mathematics\\
University of Utah\\ 
Salt Lake City, UT 84112\\ 
U.S.A.}
\address{Henning Krause\\ 
Fakult\"at f\"ur Mathematik\\ 
Universit\"at Bielefeld\\ 
33501 Bielefeld\\ 
Germany.}
\address{Julia Pevtsova\\ 
Department of Mathematics\\ 
University of Washington\\ 
Seattle, WA 98195\\ 
U.S.A.}
\begin{document}

\begin{abstract} 
We introduce the notion of $\pi$-cosupport as a new tool
for the stable module category of a finite group scheme.
In the case of a finite group, we use this to give a new
proof of the classification of tensor ideal localising subcategories.
In a sequel to this paper, we carry out the corresponding
classification for finite group schemes.
\end{abstract}

\keywords{cosupport, stable module category, 
finite group scheme, localising subcategory,
support, thick subcategory} \subjclass[2010]{16G10 (primary); 20C20,
20G10 20J06 (secondary)}

\date{4th January 2016}

\maketitle

\setcounter{tocdepth}{1}
\tableofcontents

\section*{Introduction}

The theory of support varieties for finitely generated modules over the group algebra  of a finite group began  back in the nineteen eighties with the work of Alperin and Evens~\cite{Alperin/Evens:1982a}, Avrunin and Scott~\cite{Avrunin/Scott:1982a}, Carlson \cite{Carlson:1981b,Carlson:1983a}, among others. An essential ingredient in its development was Carlson's anticipation that for elementary abelian $p$-groups the cohomological definition of support, which takes its roots in Quillen's fundamental  work on mod $p$ group cohomology \cite{Quillen:1971a}, gave the same answer as the ``rank''  definition through restriction to cyclic shifted subgroups.

To deal with infinitely generated modules, Benson, Carlson and Rickard 
\cite{Benson/Carlson/Rickard:1996a} found that they had to introduce 
cyclic shifted subgroups for generic points of subvarieties, defined over 
transcendental extensions of the field of coefficients. Correspondingly, all
the homogeneous prime ideals in the cohomology ring are involved, not 
just the maximal ones. This enabled them to classify the tensor ideal thick 
subcategories of the stable category $\stmod(kG)$ of finitely generated
$kG$-modules \cite{Benson/Carlson/Rickard:1997a}.

It seemed plausible that one should be able to  use modifications of the same  techniques to classify the tensor ideal localising subcategories of the stable category $\StMod(kG)$ of all $kG$-modules, but there were formidable technical obstructions to realising this, and it was not until more than a decade later that this was achieved by the first three authors of this paper \cite{Benson/Iyengar/Krause:2011b}, using a rather different set of ideas than the ones in \cite{Benson/Carlson/Rickard:1997a}. The basic strategy was a series of reductions, via changes of categories, that reduced the problem to that of classifying the localising subcategories of the derived category of differential graded modules over a polynomial ring, where it was solved using methods from commutative algebra. A series of papers \cite{Benson/Iyengar/Krause:2008a, Benson/Iyengar/Krause:2011a, Benson/Iyengar/Krause:2011b, Benson/Iyengar/Krause:2012b}, 
established machinery required to execute this strategy.

In this paper we give an entirely new, and conceptually different, proof of the
classification of the tensor ideal localising subcategories of
$\StMod(kG)$ from \cite{Benson/Iyengar/Krause:2011b}. It is closer in
spirit to the proof of the classification of the tensor ideal thick
subcategories of $\stmod(kG)$ from
\cite{Benson/Carlson/Rickard:1997a}, and rooted essentially in linear
algebra. The crucial new idea is to introduce and study
$\pi$-cosupports for representations.

The inspiration for this comes from two sources. The first is the theory of $\pi$-points  developed by Friedlander and the fourth author \cite{Friedlander/Pevtsova:2007a}  as a suitable generalisation of  cyclic shifted subgroups. Whereas Carlson's original construction only applied to elementary abelian $p$-groups, and required an explicit choice of generators of the group algebra, $\pi$-points allow for a ``rank variety'' description of the cohomological support for any finite group scheme; see \cite[Theorem~3.6]{Friedlander/Pevtsova:2007a} and Section~\ref{se:pi-points} of this paper. Based on this, in \cite{Friedlander/Pevtsova:2007a}   the $\pi$-support of a module over a finite group scheme $G$ defined over a field $k$ is introduced and used to classify the tensor ideal thick subcategories of $\stmod(kG)$ (with an error corrected in a sequel~\cite{Benson/Iyengar/Krause/Pevtsova:2015b} to this paper). But the tensor ideal localising subcategories of $\StMod(kG)$ remained inaccessible by these techniques alone, even for finite groups.

What is required is a $\pi$-version of the notion of cosupports introduced in \cite{Benson/Iyengar/Krause:2012b}. 
The relevance of $\pi$-cosupport is through the following formula for the module of homomorphisms, 
proved in Section~\ref{se:support-and-cosupport}. For any finite group scheme $G$ 
over $k$, and $kG$-modules $M$ and $N$, there is an equality 
\[ 
\picosupp_{G}(\Hom_k(M,N)) = \pisupp_{G}(M) \cap \picosupp_{G}(N). 
\]
To be able to apply this formula to cohomological cosupport developed in [9] one needs to identify the two notions of cosupport. Our strategy for making this identification is to prove that $\pi$-cosupport detects projectivity: a $kG$-module is projective if and only if its $\pi$-cosupport is empty. The desired classification result would then follow from general techniques developed in~\cite{Benson/Iyengar/Krause/Pevtsova:2015b}.

In Section~\ref{se:finite-groups} we prove such a detection theorem for projectivity for \emph{finite groups}.  Besides yielding the desired classification theorem for $\StMod(kG)$ from \cite{Benson/Iyengar/Krause:2011b}, it implies that cohomological support and cosupport
coincide with $\pi$-support and $\pi$-cosupport, respectively.  This remarkable fact is a vast generalisation of Carlson's original anticipation. The different origins of the two notions are reflected in the fact that phenomena that are transparent, or at least easy to detect, for one may be rather opaque and difficult to verify for the other. See Section~\ref{se:applications} for illustrations. 

The corresponding detection theorem for arbitrary finite group schemes has turned out to be more challenging, and is dealt with in the sequel to this paper \cite{Benson/Iyengar/Krause/Pevtsova:2015b}, using a different  approach, where again $\pi$-support and $\pi$-cosupport play a crucial role. This brings us to the second purpose  of this paper: To lay the groundwork for the proof in \cite{Benson/Iyengar/Krause/Pevtsova:2015b}. For this  reason parts of this paper are written in the language of finite group schemes. However, we have attempted to present it in such a way that the reader only interested in finite groups can easily ignore the extra generality.

\section{Finite group schemes}
\label{se:affine}

This section summarises basic property of modules over affine group schemes; for details we refer the reader to Jantzen
\cite{Jantzen:2003a} and Waterhouse \cite{Waterhouse:1979a}.

Let $k$ be a field. An \emph{affine group scheme} $G$ over $k$ is a functor from commutative $k$-algebras to groups, with the property
that, considered as a functor to sets, it is representable as $\Hom_{k\text{\rm -alg}}(R,-)$. The commutative $k$-algebra $R$ has a
comultiplication coming from the multiplicative structure of $G$, and an antipode coming from the inverse. This makes $R$ into a commutative
Hopf algebra called the \emph{coordinate algebra} $k[G]$ of $G$. Conversely, if $k[G]$ is a commutative Hopf algebra over $k$ then
$\Hom_{k\text{\rm -alg}}(k[G],-)$ is an affine group scheme. This work concerns only affine group schemes so henceforth we drop the
qualifier ``affine''.

A group scheme $G$ over $k$ is \emph{finite} if $k[G]$ is finite dimensional as a $k$-vector space. The $k$-linear dual of $k[G]$ is
then a cocommutative Hopf algebra, called the \emph{group algebra} of $G$, and denoted $kG$.

We identify modules over a finite group scheme $G$ with modules over
its group algebra $kG$; this is justified by
\cite[I.8.6]{Jantzen:2003a}. Thus, we will speak of $kG$-modules
(rather than $G$-modules), and write $\Mod kG$ for the category of
$kG$-modules.

\subsection*{Extending the base field} 

Let $G$ be a finite group scheme over a field $k$. If $K$ is a field
extension of $k$, we write $K[G]$ for $K \otimes_k k[G]$, which is a
commutative Hopf algebra over $K$. This defines a group scheme over
$K$ denoted $G_K$, and we have a natural isomorphism
$KG_K\cong K\otimes_k kG$.

For each $kG$-module $M$, we set
\[ M_K:=K \otimes_k M\quad\text{and}\quad M^K:=\Hom_k(K,M),
\] viewed as $KG_K$-modules. When $K$ or $M$ is finite dimensional over
$k$, these are related as follows.

\begin{remark}
\label{re:finite-dimensional} For any $kG$-module $M$, there is a
natural map of $KG_K$-modules
\[ \Hom_k(K,k) \otimes_k M \lra \Hom_k(K,M)\,.
\] This is a bijection when $K$ or $M$ is finite dimensional over
$k$. Then $M^{K}$ is a direct sum of copies of $M_K$ as a $KG_K$-module,
for $\Hom_k(K,k)$ is a direct sum of copies of $K$, as a $K$-vector
space.
\end{remark}

The assignments $M\mapsto M_{K}$ and $M\mapsto M^{K}$ define functors
from $\Mod kG$ to $\Mod KG_K$ that are left and right adjoint,
respectively, to restriction of scalars along the homomorphism of
rings $kG\to KG_K$. The result below collects some basic facts
concerning how these functors interact with tensor products and
homomorphisms. In what follows, the submodule of $G$-invariants of a
$kG$-module $M$ is denoted $M^{G}$; see \cite[I.2.10]{Jantzen:2003a}
for the construction.

\begin{lemma}
\label{le:invariants} Let $G$ be a finite group scheme over $k$ and
$K$ an extension of the field $k$. Let $M$ and $N$ be $kG$-modules. 

There are natural isomorphisms of $KG_K$-modules:
\begin{enumerate}
\item[\rm(i)] $ (M \otimes_k N)_K \cong M_K \otimes_K N_K$.
\item[\rm(ii)] $(M \otimes_k N)^K \cong M_K \otimes_K N^K$ when $M$ is finite
dimensional over $k$.
\item[\rm(iii)] $\Hom_k(M,N)^K \cong \Hom_K(M_K,N^K)$.
\end{enumerate}
There are also natural isomorphisms of $K$-vector spaces:
\begin{enumerate}
\item[\rm(iv)] $\Hom_{kG}(M,N)^K \cong \Hom_{KG_K}(M_K,N^K)$.
\item[\rm(v)] $(M^G)^K \cong (M^K)^{G_K} $.
\end{enumerate}
\end{lemma}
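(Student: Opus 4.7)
The plan is to prove the five isomorphisms in the order (i), (iii), (ii), (v), (iv), deriving the later ones from the earlier ones so as to minimise repetition. Throughout, the $G$-actions on tensor products and Hom-spaces are the standard ones coming from the Hopf structure on $kG$ (diagonal via comultiplication, conjugation via the antipode on $\Hom$), and the corresponding formulas hold for $KG_K$; the analogous statements ignoring the group action are all classical results from multilinear algebra, so the main content in every part is to verify that the isomorphisms we write down are $G_K$-equivariant.

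Part (i) is essentially the compatibility of extension of scalars with tensor products: the natural map $K\otimes_k(M\otimes_k N)\to(K\otimes_k M)\otimes_K(K\otimes_k N)$ is an isomorphism of $K$-modules, and one checks it is compatible with the diagonal action since the comultiplication on $KG_K$ is obtained from that of $kG$ by base change. For (iii), I would use the standard adjunction chain
\[
\Hom_k(K,\Hom_k(M,N))\;\cong\;\Hom_k(K\otimes_k M,N)\;\cong\;\Hom_K(M_K,\Hom_k(K,N))\,,
\]
where the first step is $\otimes$-$\Hom$ adjunction over $k$ and the second uses that restriction of scalars along $k\to K$ is left adjoint to $\Hom_k(K,-)$. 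One then checks that the composite carries the conjugation $G_K$-action on $\Hom_k(M,N)^K$ to the conjugation $G_K$-action on $\Hom_K(M_K,N^K)$; this is formal from the compatibility of the antipode with base change.

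For (ii), when $M$ is finite-dimensional over $k$ the evaluation map $M^*\otimes_k N\to\Hom_k(M,N)$ is a $kG$-module isomorphism, and the canonical map $(M^*)_K\to(M_K)^*$ is an isomorphism of $KG_K$-modules. Substituting $M^*$ for $M$ in (iii) therefore gives
\[
(M\otimes_k N)^K\;\cong\;\Hom_k(M^*,N)^K\;\cong\;\Hom_K((M^*)_K,N^K)\;\cong\;M_K\otimes_K N^K.
\]
For (v), since $G$ is finite, $k[G]$ is finite-dimensional, so Remark~\ref{re:finite-dimensional} and the finite-dimensional case of (i) give a natural isomorphism $\Hom_k(K,M\otimes_k k[G])\cong M^K\otimes_K K[G]$. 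The submodule $M^G\subseteq M$ is by definition the equaliser of the coaction $M\to M\otimes_k k[G]$ and the map $m\mapsto m\otimes 1$; applying the left-exact functor $\Hom_k(K,-)$ to this equaliser and transporting along the isomorphism above identifies the result with the equaliser defining $(M^K)^{G_K}$.

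Finally, (iv) follows by combining (iii) and (v): one has $\Hom_{kG}(M,N)=\Hom_k(M,N)^G$ and $\Hom_{KG_K}(M_K,N^K)=\Hom_K(M_K,N^K)^{G_K}$, so applying $(-)^K$ to the first identity, using (v) with $\Hom_k(M,N)$ in place of $M$, and then using the $G_K$-equivariant form of (iii) yields the desired isomorphism. The main obstacle throughout is nothing deep but rather notational: one must keep track of the Hopf-algebraic origin of each $G$- or $G_K$-action and verify that every canonical $K$-linear isomorphism used is in fact equivariant. The finiteness hypotheses enter only to identify $M^K$ with a direct sum of copies of $M_K$ (for (i) and (v) via $k[G]$ being finite-dimensional) and to identify $M$ with $M^{**}$ in (ii).
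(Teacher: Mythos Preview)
Your argument is correct, but your ordering and derivations differ from the paper's in two places worth noting. First, the paper treats (iv) as standard (it is just the extension/coextension adjunction combined with the tensor--Hom adjunction) and then obtains (v) by specialising (iv) to $M=k$, since $\Hom_{kG}(k,N)=N^{G}$ and $k_{K}=K$. This is considerably quicker than your route through the comodule equaliser, and avoids the auxiliary isomorphism $\Hom_k(K,M\otimes_k k[G])\cong M^K\otimes_K K[G]$ altogether. Second, for (ii) the paper writes down the direct natural map $M\otimes_k\Hom_k(K,N)\to\Hom_k(K,M\otimes_k N)$ and observes it is an isomorphism because $M$ is finite dimensional; your detour through $M^{**}$ and (iii) reaches the same conclusion but is longer and uses the Hopf compatibility of duals, which the paper's argument does not need.

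There is one genuine slip in your sketch of (v): the isomorphism $\Hom_k(K,M\otimes_k k[G])\cong M^K\otimes_K K[G]$ is exactly (ii) with $k[G]$ (which is finite dimensional) playing the role of $M$ and your $M$ playing the role of $N$. It is not a consequence of Remark~\ref{re:finite-dimensional} together with (i), as you write; Remark~\ref{re:finite-dimensional} requires either $K$ or the module to be finite dimensional over $k$, and neither holds here in general. Your closing summary of where finiteness enters is correspondingly muddled: (i) needs no finiteness at all, and the only finiteness used in your proof of (v) is that of $k[G]$, via (ii). Once you correct these citations the argument goes through.
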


\begin{proof} The isomorphisms in (i), (iii) and (iv) are standard whilst (v) is the special case $M=k$ and $N=M$ of (iv). The
isomorphism in (ii) can be realised as the composition of natural maps
\[
(M\otimes_{k}K) \otimes_{K} \Hom_{k}(K,N) \xra{\ \sim\ } 
M\otimes_{k} \Hom_{k}(K,N) \xra{\ \sim\ }  \Hom_{k}(K,M\otimes_{k}N)
\] 
where the last map is an isomorphism as $M$ is finite
dimensional over $k$.
\end{proof}

\subsection*{Examples of finite group schemes} 
We recall some important classes of finite group schemes relevant to this work.

\begin{example}[Finite groups]
  \label{ex:finite-groups} 
  A finite group $G$ defines a finite group scheme over any field
  $k$. More precisely, the group algebra $kG$ is a finite dimensional
  cocommutative Hopf algebra, and hence its dual is a commutative Hopf
  algebra which defines a group scheme over $k$; it is also denoted
  $G$.  A finite group $E$ is an \emph{elementary abelian $p$-group} if it is
  isomorphic to $(\bbZ/p)^{r}$, for some prime number $p$. The integer
  $r$ is then the \emph{rank} of $E$. Over a field $k$ of
  characteristic $p$, there are isomorphisms of $k$-algebras
\[ 
k[E]\cong k^{\times r}\quad\text{and}\quad kE\cong k[z_1,\dots,z_{r}]/(z_1^p,\dots,z_{r}^p).
\] 
The comultiplication on $kE$ is determined by the map $z_{i}\mapsto z_{i}\otimes 1 + z_{i}\otimes z_{i} + 1\otimes z_{i}$ and the antipode
is determined by the map $z_{i}\mapsto (z_{i}+1)^{p-1}-1$.
\end{example}

\begin{example}[Additive groups]
\label{ex:ga} 
Fix a positive integer $r$ and let $\bbG_{a(r)}$ denote the
finite group scheme whose coordinate algebra is
\[
k[\bbG_{a(r)}]= k[t]/(t^{p^r})
\]
with comultiplication defined by $t\mapsto t\otimes 1 + 1\otimes t$
and antipode $t\mapsto -t$.  There is an isomorphism of $k$-algebras
\[
k\bbG_{a(r)}\cong k[u_0,\dots,u_{r-1}]/(u_0^p,\dots,u_{r-1}^p).
\]
We note that $\bbG_{a(r)}$ is the $r$th Frobenius kernel of the
additive group scheme $\bbG_{a}$ over $k$; see, for instance,
\cite[I.9.4]{Jantzen:2003a}
\end{example}

\begin{example}[Quasi-elementary group schemes]
  \label{ex:quasi-elementary} 
Following Bendel~\cite{Bendel:2001a}, a  group scheme over a field $k$ of positive 
characteristic $p$ is said to be \emph{quasi-elementary} if it is isomorphic to $\bbG_{a(r)} \times (\bbZ/p)^s$. Its group algebra structure is the same as that of an elementary abelian $p$-group.
\end{example}

A finite group scheme $G$ over a field $k$ is \emph{unipotent} if its
group algebra $kG$ is local. Quasi-elementary group schemes are
unipotent. Also, the group scheme over a field of positive
characteristic $p$ defined by a finite $p$-group is unipotent.

\section{$\pi$-points}
\label{se:pi-points} In the rest of this paper $G$ denotes a finite
group scheme defined over a field $k$ of positive characteristic $p$. We
recall the notion of $\pi$-points and basic results about them. The
primary references are the papers of Friedlander and Pevtsova
\cite{Friedlander/Pevtsova:2005a,Friedlander/Pevtsova:2007a}.

\subsection*{$\pi$-points} A $\pi$-\emph{point} of $G$, defined over a
field extension $K$ of $k$, is a morphism of $K$-algebras
\[ 
\alpha\col K[t]/(t^p) \lra KG_K
\] 
that factors through the group algebra of a unipotent abelian
subgroup scheme $C$ of $G_{K}$, and such that $KG_K$ is flat when viewed
as a left (equivalently, as a right) module over $K[t]/(t^{p})$ via
$\alpha$. It should be emphasised that $C$ need not be defined over
$k$; see Examples~\ref{ex:klein}. Restriction along $\alpha$ defines a
functor
\[ 
\alpha^{*}\col \Mod KG_K \lra \Mod K[t]/(t^{p})\,.
\]

The result below extends \cite[Theorem~4.6]{Friedlander/Pevtsova:2007a}, that dealt with $M_{K}$.

\begin{theorem}
\label{th:pi-equivalence} Let $\alpha\col K[t]/(t^p)\to KG_K$ and
$\beta\col L[t]/(t^p)\to LG_L$ be $\pi$-points of $G$. Then the
following conditions are equivalent.
\begin{enumerate}[{\quad\rm(i)}]
\item For any finite dimensional $kG$-module $M$, the module
$\alpha^*(M_K)$ is projective if and only if $\beta^*(M_L)$ is
projective.
\item For any $kG$-module $M$, the module $\alpha^*(M_K)$ is
projective if and only if $\beta^*(M_L)$ is projective.
\item For any finite dimensional $kG$-module $M$, the module
$\alpha^*(M^K)$ is projective if and only if $\beta^*(M^L)$ is
projective.
\item For any $kG$-module $M$, the module $\alpha^*(M^K)$ is
projective if and only if $\beta^*(M^L)$ is projective.
\end{enumerate}
\end{theorem}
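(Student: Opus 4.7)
The four conditions split into two blocks and two regimes: $M_K$ versus $M^K$, and finite-dimensional versus arbitrary $M$. My plan is to reduce the whole equivalence to a single pointwise claim comparing $\alpha^*(M_K)$ with $\alpha^*(M^K)$, and then invoke the known equivalence $\mathrm{(i)}\Leftrightarrow\mathrm{(ii)}$ of \cite[Theorem~4.6]{Friedlander/Pevtsova:2007a}. The pointwise claim I aim for is this: for any $\pi$-point $\alpha\col K[t]/(t^p)\to KG_K$ and any $kG$-module $M$, $\alpha^*(M_K)$ is projective if and only if $\alpha^*(M^K)$ is projective. Applied to both $\alpha$ and $\beta$ and combined with (ii), this immediately chains (i) through (iv).

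The pointwise claim is immediate for finite-dimensional $M$ from Remark~\ref{re:finite-dimensional}: it gives a $KG_K$-isomorphism $M^K \cong \Hom_k(K,k)\otimes_k M$, and since $\Hom_k(K,k)$ is free as a $K$-module, $M^K$ is a direct sum of copies of $M_K$ in which $M_K$ appears as a direct summand. Restricting along $\alpha$, the $K[t]/(t^p)$-module $\alpha^*(M^K)$ is a direct sum of copies of $\alpha^*(M_K)$ with $\alpha^*(M_K)$ as a summand, and projectivity is preserved by direct sums and direct summands. In particular this step already settles $\mathrm{(i)}\Leftrightarrow\mathrm{(iii)}$ and, by specialising to finite-dimensional $M$, also $\mathrm{(iv)}\Rightarrow\mathrm{(iii)}$.

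The main obstacle is the pointwise claim for infinite-dimensional $M$, where Remark~\ref{re:finite-dimensional} no longer applies. My approach is to write $M = \operatorname{colim}_\lambda M_\lambda$ as a filtered colimit of its finite-dimensional $kG$-submodules. Then $M_K = \operatorname{colim}_\lambda (M_\lambda)_K$ as $KG_K$-modules, and since $K[t]/(t^p)$ is local artinian, flat equals projective and filtered colimits preserve flatness, so the projectivity of $\alpha^*(M_K)$ is governed by the finite-dimensional pieces. On the $M^K$ side, however, the functor $(-)^K = \Hom_k(K,-)$ is a right adjoint and preserves limits rather than colimits, so the natural map $M^K \to \lim_\lambda M_\lambda^K$ need not be an isomorphism. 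To bridge the two sides I plan to exploit Lemma~\ref{le:invariants}(iii), $\Hom_k(M,N)^K \cong \Hom_K(M_K, N^K)$, together with the Frobenius structure of $K[t]/(t^p)$ (where projective equals injective, and products of projectives are projective) in order to reformulate projectivity of $\alpha^*(M^K)$ as a condition on $M_K$ of exactly the type handled by (ii). The crux will be compatibility with the specific element $\alpha(t)\in KG_K$: on $M_K = K\otimes_k M$ it acts simply on the tensor factors, whereas on $M^K = \Hom_k(K,M)$ it couples the $K$-source to the $M$-target of $\Hom_k(K,M)$ in a way not present for $M_K$, and ensuring that this coupling is compatible with the passage from finite-dimensional $M_\lambda$ to their colimit is where I expect the genuine difficulty to lie.
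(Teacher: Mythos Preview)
Your treatment of (i)$\Leftrightarrow$(ii) by citation, and of (i)$\Leftrightarrow$(iii) via Remark~\ref{re:finite-dimensional}, is correct and matches the paper exactly. The problem is your plan for reaching (iv): the ``pointwise claim'' you set out to prove---that for \emph{any} $kG$-module $M$ and any $\pi$-point $\alpha$, $\alpha^*(M_K)$ is projective if and only if $\alpha^*(M^K)$ is projective---is simply false. Example~\ref{ex:klein2} gives an explicit infinite-dimensional $kV$-module $M$ over the Klein four group for which the generic point lies in $\picosupp_V(M)$ but not in $\pisupp_V(M)$; unwinding the definitions, this says precisely that for the corresponding $\pi$-point $\alpha$, the module $\alpha^*(M_K)$ is projective while $\alpha^*(M^K)$ is not. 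So no amount of care with filtered colimits or Frobenius structure will rescue that step: the target statement is not true.

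What you are missing is that the theorem is a statement about the \emph{equivalence relation on $\pi$-points}, not about individual modules. Conditions (i)--(iv) each define an equivalence relation on $\pi$-points, and the claim is that these four relations coincide; this does \emph{not} require that for a fixed $\alpha$ and fixed $M$ the two projectivity tests agree. The paper handles (iii)$\Leftrightarrow$(iv) by observing that the proof of \cite[Theorem~4.6]{Friedlander/Pevtsova:2007a} for (i)$\Leftrightarrow$(ii) goes through verbatim with $M^K$ in place of $M_K$: that argument works by relating the $\pi$-point conditions to the prime $\sqrt{\Ker H^*(\alpha)}$, and this cohomological characterisation is insensitive to which of the two module-level tests one uses. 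Once (iii)$\Leftrightarrow$(iv) is in hand, your finite-dimensional argument gives (i)$\Leftrightarrow$(iii), and the chain closes.
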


\begin{proof} The equivalence of (i) and (ii) is proved in
\cite[Theorem~4.6]{Friedlander/Pevtsova:2007a}. The equivalence of
(iii) and (iv) can be proved in exactly the same way.

(i)$\iff$(iii) Since $M$ is finite dimensional, $M^{K}$ is a direct
sum of copies of $M_{K}$, by Remark~\ref{re:finite-dimensional}. Hence
$\alpha^*(M^K)$ is projective if and only if $\alpha^*(M_K)$ is
projective. The same is true of $\beta^*(M^L)$ and $\beta^*(M_L)$.
\end{proof}

\begin{definition}
\label{de:pi} 
When $\pi$-points $\alpha$ and $\beta$ satisfy the conditions of Theorem~\ref{th:pi-equivalence}, they are said to be
\emph{equivalent}, and denoted $\alpha\sim\beta$.
\end{definition}

For ease of reference, we list some basic properties of $\pi$-points.

\begin{remark}
\label{re:pi-basics}
(1) Let $\alpha\col K[t]/(t^p)\to KG_K$ be a $\pi$-point and $L$ a field
extension of $K$. Then $L\otimes_{K}\alpha\col L[t]/(t^p)\to LG_L$ is a
$\pi$-point and it is easy to verify, say from condition (i) of
Theorem~\ref{th:pi-equivalence}, that $\alpha\sim L \otimes_K \alpha$.

(2) Every $\pi$-point of a subgroup scheme $H$ of $G$ is naturally a 
$\pi$-point of $G$. This follows from the fact that an embedding of group 
schemes always induces a flat map of group algebras.

(3) Every $\pi$-point is equivalent to one that factors through a quasi-elementary 
subgroup scheme over the same field extension; 
see \cite[Proposition~4.2]{Friedlander/Pevtsova:2005a}.
\end{remark}

\subsection*{$\pi$-points and cohomology} The cohomology of $G$ with
coefficients in a $kG$-module $M$ is denoted $H^{*}(G,M)$. It can be
identified with $\Ext_{kG}^{*}(k,M)$.  Recall that $H^{*}(G,k)$ is a
$k$-algebra that is graded-commutative (because $kG$ is a Hopf
algebra) and finitely generated, as was proved by Friedlander and
Suslin \cite[Theorem~1.1]{Friedlander/Suslin:1997a}.

Let $\Proj H^*(G, k)$ denote the set of homogeneous prime ideals
$H^*(G, k)$ that are properly contained in the maximal ideal of
positive degree elements.

Given a $\pi$-point $\alpha\col K[t]/(t^{p})\to KG_K$ we write
$H^{*}(\alpha)$ for the composition of homomorphisms of $k$-algebras.
\[ 
H^{*}(G,k) = \Ext^{*}_{kG}(k,k) \xra{\ K\otimes_{k}-}
\Ext^{*}_{KG_K}(K,K) \lra \Ext^{*}_{K[t]/(t^{p})}(K,K),
\] 
where the second map is induced by restriction. By Frobenius reciprocity and the theorem of Friedlander and Suslin recalled above, $\Ext^{*}_{K[t]/(t^{p})}(K,K)$ is finitely generated as a module over $\Ext^{*}_{KG_K}(K,K)$. Since the former is nonzero, it follows that the radical of the kernel of the map $\Ext^{*}_{KG_K}(K,K) \to \Ext^{*}_{K[t]/(t^{p})}(K,K)$ is a prime ideal different from $\Ext^{\ges 1}_{KG_K}(K,K)$ and hence that the radical of  $\Ker H^{*}(\alpha)$ is a prime ideal in $H^*(G,k)$, different from $H^{\ges 1}(G,k)$.

\begin{remark}
\label{rem:generic-points} 
Fix a point $\fp$ in $\Proj H^{*}(G,k)$. There exists a field $K$ and a $\pi$-point
\[ 
\alpha_\fp\col K[t]/(t^p)\lra KG_K
\] 
such that $\sqrt{\Ker H^{*}(\alpha_{\fp})}=\fp$. In fact, there is such a $K$ that is a finite extension of the degree
zero part of the homogenous residue field at $\fp$; see \cite[Theorem~4.2]{Friedlander/Pevtsova:2007a}.
\end{remark}

It is shown in \cite[Corollary~2.11]{Friedlander/Pevtsova:2007a} that
$\alpha\sim\beta$ if and only if there is an equality
\[ 
\sqrt{\Ker H^{*}(\alpha)} = \sqrt{\Ker H^{*}(\beta)}\,.
\] 
In this way, the equivalence classes of $\pi$-points are in bijection with $\Proj H^*(G,k)$.

\begin{theorem}[{\cite[Theorem~3.6]{Friedlander/Pevtsova:2007a}}] Let
$G$ be a finite group scheme over a field $k$. Taking a $\pi$-point
$\alpha$ to the radical of $\Ker H^{*}(\alpha)$ induces a bijection
between the set of equivalence classes of $\pi$-points of $G$ and
$\Proj H^*(G,k)$.\qed
\end{theorem}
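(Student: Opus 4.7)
The plan is to assemble the theorem from the two ingredients already quoted in the preceding paragraphs: the cohomological criterion for equivalence of $\pi$-points (Corollary~2.11 of \cite{Friedlander/Pevtsova:2007a}) and the realisation statement in Remark~\ref{rem:generic-points} (Theorem~4.2 of the same reference). Once both are in hand, the map in question is visibly a bijection.

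First, I would verify that the assignment $\alpha\mapsto \sqrt{\Ker H^{*}(\alpha)}$ really takes values in $\Proj H^{*}(G,k)$. This is precisely the content of the discussion preceding Remark~\ref{rem:generic-points}: because $\Ext^{*}_{K[t]/(t^p)}(K,K)$ is finitely generated over $\Ext^{*}_{KG_K}(K,K)$ (via Frobenius reciprocity together with the Friedlander--Suslin theorem) and nonzero, the radical of $\Ker H^{*}(\alpha)$ is a homogeneous prime ideal that is strictly smaller than the irrelevant ideal.

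Next, I would invoke the equivalence $\alpha\sim\beta \Iff \sqrt{\Ker H^{*}(\alpha)}=\sqrt{\Ker H^{*}(\beta)}$ recalled just above the theorem. The forward implication ensures that the assignment descends to a well-defined map on equivalence classes, and the reverse implication is exactly the injectivity of this induced map. For surjectivity, I would cite Remark~\ref{rem:generic-points}: for any $\fp\in\Proj H^{*}(G,k)$, there exists a field extension $K$ of $k$ and a $\pi$-point $\alpha_{\fp}\col K[t]/(t^{p})\to KG_K$ with $\sqrt{\Ker H^{*}(\alpha_{\fp})}=\fp$, so $\fp$ lies in the image.

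The genuinely substantive content — namely, both the cohomological criterion for equivalence and the existence of a $\pi$-point representing every prime of $\Proj H^{*}(G,k)$ — is carried by the cited results of Friedlander and Pevtsova, so at this point in the exposition the theorem is a formal consequence. The hardest part of the proof, were one to reprove the ingredients from scratch, would be the surjectivity statement: one needs to reduce to a quasi-elementary subgroup scheme (cf.\ Remark~\ref{re:pi-basics}(3)) and then explicitly construct a $\pi$-point over a suitable residue field realising a given cohomological prime, in the spirit of Carlson's rank varieties.
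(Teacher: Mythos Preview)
The paper does not give its own proof of this statement: it is quoted from \cite[Theorem~3.6]{Friedlander/Pevtsova:2007a} and marked with a terminal \qed, indicating that the result is taken as input. Your proposal is correct and is precisely the intended reading of the surrounding text --- the two facts recalled immediately before the theorem (well-definedness and the criterion $\alpha\sim\beta \iff \sqrt{\Ker H^{*}(\alpha)}=\sqrt{\Ker H^{*}(\beta)}$, together with Remark~\ref{rem:generic-points}) are exactly what one needs to conclude that the induced map on equivalence classes is a bijection, and you have assembled them in the right order.
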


We illustrate these ideas on the Klein four group that will be the
running example in this work.

\begin{example}
\label{ex:klein} 
Let $V=\bbZ/2 \times \bbZ/2$ and $k$ a field of characteristic two. The group algebra $kV$ is isomorphic to $k[x,y]/(x^{2},y^{2})$, where $x+1$ and $y+1$ correspond to the generators of $V$. Let $J=(x,y)$ denote the Jacobson radical of $kV$. It is well-known that $H^{*}(V,k)$ is the symmetric algebra on the $k$-vector space $\Hom_{k}(J/J^{2},k)$; see, for example, \cite[Corollary~3.5.7]{Benson:1998b}. Thus $H^{*}(V,k)$ is a polynomial ring over $k$ in two variables in degree one and $\Proj H^{*}(V,k)\cong \bbP^{1}_{k}$.

The $\pi$-point corresponding to a rational point $[a,b]\in\bbP^{1}_{k}$ (using homogeneous coordinates) is represented
by the map of $k$-algebras
\[ 
k[t]/(t^{p})\lra k[x,y]/(x^{2},y^{2})\quad\text{where $t\mapsto ax + by$.}
\]

More generally, for each closed point $\fp\in \bbP^1_k$ there is some
finite field extension $K$ of $k$ such that $\bbP^1_K$ contains a
rational point $[a',b']$ over $\fp$ (with $\Aut(K/k)$ acting
transitively on the finite set of points over $\fp$).  Then the
$\pi$-point corresponding to $\fp$ is represented by the map of
$K$-algebras
\[ K[t]/(t^{p})\lra K[x,y]/(x^{2},y^{2})\quad\text{where $t\mapsto a'x
+ b'y$.}
\]

Now let $K$ denote the field of rational functions in a variable
$s$. The generic point of $\bbP^{1}_{k}$ then corresponds to the map
of $K$-algebras
\[ K[t]/(t^{p})\lra K[x,y]/(x^{2},y^{2}) \quad\text{where $t\mapsto
x+sy$.}
\]
\end{example}

\section{$\pi$-cosupport and $\pi$-support}
\label{se:support-and-cosupport}

As before, $G$ is a finite group scheme over a field $k$ of positive characteristic $p$. In this section, we introduce a notion of $\pi$-cosupport of a $kG$-module, by analogy with the notion of $\pi$-support introduced in \cite[\S5]{Friedlander/Pevtsova:2007a}. The main result, Theorem~\ref{th:tensor-and-hom-pi}, is a formula that computes the $\pi$-cosupport of a function object, in terms of the $\pi$-support
and $\pi$-cosupport of its component modules.

\begin{definition}
\label{de:cosupport} The \emph{$\pi$-cosupport} of a $kG$-module $M$
is the subset of $\Proj H^{*}(G,k)$ defined by
\[ 
\picosupp_{G}(M) := \{\fp\in\Proj H^*(G,k) \mid \text{$\alpha_\fp^*(\Hom_k(K,M))$ is not projective}\}.
\] 
Here $\alpha_\fp\col K[t]/(t^p)\to KG_K$ denotes a representative of the equivalence class of $\pi$-points corresponding to $\fp$; see
Remark~\ref{rem:generic-points}. The definition is modelled on that of the \emph{$\pi$-support} of $M$, introduced in \cite{Friedlander/Pevtsova:2007a} as the subset
\[ 
\pisupp_{G}(M) := \{\fp\in\Proj H^*(G,k) \mid \text{$\alpha_\fp^*(K\otimes_k M)$ is not projective}\}.
\] 
This is denoted $\Pi(G)_{M}$ in \cite{Friedlander/Pevtsova:2007a}; our notation is closer to the one used in \cite{\bik:2008a} for cohomological support.
\end{definition}

\subsection*{Projectivity} For later use, we record the well-known
property that the module of homomorphisms preserves and detects
projectivity.

\begin{lemma}
\label{le:end-projectivity} Let $M$ and $N$ be $kG$-modules.
\begin{enumerate}[{\quad\rm(i)}]
\item If $M$ or $N$ is projective, then so is $\Hom_{k}(M,N)$.
\item $M$ is projective if and only if $\End_{k}(M)$ is projective.
\end{enumerate}
\end{lemma}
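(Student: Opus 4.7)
The plan is to leverage two standard structural features of the group algebra $kG$ of a finite group scheme: first, $kG$ is a finite-dimensional Hopf algebra and hence a Frobenius algebra, so a $kG$-module is projective if and only if it is injective; second, since $kG$ is cocommutative, $\Mod kG$ is a symmetric monoidal closed category with internal hom $\Hom_k(-,-)$, yielding natural adjunctions
\[
\Hom_{kG}(X \otimes_k M, N) \cong \Hom_{kG}(X, \Hom_k(M,N)) \cong \Hom_{kG}(M, \Hom_k(X,N)).
\]

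For (i), I would prove projectivity of $\Hom_k(M,N)$ by proving its injectivity, i.e.\ exactness of $\Hom_{kG}(-, \Hom_k(M,N))$. If $N$ is projective, hence injective, rewrite this functor via the first adjunction as $\Hom_{kG}(- \otimes_k M, N)$: the factor $- \otimes_k M$ is exact because we are tensoring over a field, and $\Hom_{kG}(-,N)$ is exact by injectivity of $N$. If instead $M$ is projective, use the symmetry of $\otimes_k$ to rewrite as $\Hom_{kG}(M, \Hom_k(-, N))$: the factor $\Hom_k(-, N)$ is exact over $k$, and $\Hom_{kG}(M,-)$ is exact by projectivity of $M$. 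Either way $\Hom_k(M,N)$ is injective, hence projective.

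For (ii), the forward direction is the special case $N=M$ of (i). For the converse, the idea is to exhibit $M$ as a direct summand of $\End_k(M) \otimes_k M$: the coevaluation $k \to \End_k(M)$ sending $1$ to $\id_M$ is $kG$-linear (the antipode identity $\sum h_{(1)} S(h_{(2)}) = \epsilon(h)$ shows $\id_M$ is fixed by the conjugation action), and composing with the evaluation $\End_k(M) \otimes_k M \to M$ recovers the identity on $M$. It remains to recall that $P \otimes_k X$ is projective for any $kG$-module $X$ whenever $P$ is projective: one reduces to $P = kG$ and uses the Hopf-algebraic twist $h \otimes x \mapsto \sum h_{(1)} \otimes S(h_{(2)})x$ to identify the diagonal action on $kG \otimes_k X$ with the action via left multiplication on the first factor, which is manifestly free. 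Hence projectivity of $\End_k(M)$ descends to $M$.

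No step poses a real obstacle; the argument is essentially bookkeeping once the Hopf-algebraic adjunctions and the Frobenius property are in play. The only point requiring a moment's care is verifying that $\id_M$ is genuinely a $kG$-invariant in $\End_k(M)$ under the conjugation action, which is a direct application of the antipode axiom.
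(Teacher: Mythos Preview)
Your proof is correct and follows essentially the same approach as the paper: both use the Frobenius property (projective $=$ injective), the tensor--hom adjunctions (with cocommutativity giving the symmetry needed for the case $M$ projective), and for (ii) the splitting of $M$ as a summand of $\End_k(M)\otimes_k M$ via coevaluation/evaluation. Your presentation differs only in that you spell out why $\id_M$ is $kG$-invariant and give an explicit Hopf-twist argument for projectivity of $P\otimes_k X$, whereas the paper invokes the abstract fact that a left adjoint of an exact functor preserves projectives.
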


\begin{proof} We repeatedly use the fact that a $kG$-module is
projective if and only if it is injective; see, for example,
\cite[Lemma~I.3.18]{Jantzen:2003a}.

(i) The functor $\Hom_{k}(M,-)$ takes injective $kG$-modules to
injective $kG$-modules because it is right adjoint to an exact
functor.  Thus, when $N$ injective, so is $\Hom_{k}(M,N)$. The same
conclusion follows also from the projectivity of $M$ because
$\Hom_{k}(-,N)$ takes projective modules to injective modules, as
follows from the natural isomorphisms:
 \begin{align*} 
\Hom_{kG}(-,\Hom_{k}(M,N)) 
	&\cong \Hom_{kG}(-\otimes_{k}M,N) \\ 
	&\cong \Hom_{kG}(M\otimes_{k}-,N) \\
	&\cong \Hom_{kG}(M, \Hom_{k}(-,N))\,.
 \end{align*} 
The first and the last isomorphisms are by adjunction, and the one in the middle holds because $kG$ is cocommutative.
 
(ii) When $M$ is projective, so is $\End_k(M)$, by (i). For the converse, observe that when $\End_{k}(M)$ is projective, so is $\End_{k}(M)\otimes_{k}M$, since $-\otimes_k M$ preserves projectivity being the left adjoint of an exact functor. It remains to note that $M$ is a direct summand of $\End_{k}(M)\otimes_{k}M$, because the composition of the homomorphisms
\begin{gather*} 
M\xra{\ \nu\ } \End_{k}(M)\otimes_{k}M \xra{\ \eps\ }
M,\quad\text{where} \\ \nu(m)=\id_{M}\otimes m\quad\text{and}\quad \eps(f\otimes m) = f(m),
\end{gather*} 
of $kG$-modules equals the identity on $M$.
\end{proof}

We now work towards Theorem~\ref{th:tensor-and-hom-pi} that gives a
formula for the cosupport of a function object, and the support of a
tensor product. These are useful  for studying modules over
finite group schemes, as will become clear in
Section~\ref{se:applications}; see also
\cite{Benson/Iyengar/Krause/Pevtsova:2015b}.

\subsection*{Function objects and tensor products} 
The proof of Theorem~\ref{th:tensor-and-hom-pi} is complicated by 
the fact that, in general, a $\pi$-point $\alpha\col K[t]/(t^p)\to KG_K$ 
does not preserve Hopf structures, so restriction along $\alpha$ does 
not commute with taking tensor products, or the module of homomorphisms. 
To deal with this situation, we adapt an idea from the proof of 
\cite[Lemma~3.9]{Friedlander/Pevtsova:2005a}---see also \cite[Lemma~6.4]{Carlson:1983a} 
and \cite[Lemma~6.4]{Bendel/Friedlander/Suslin:1997b}---where the 
equivalence of (i) and (iii) in the following result is proved. The hypothesis 
on the algebra $A$ is motivated by Remark~\ref{re:pi-basics}(3) and 
Example~\ref{ex:quasi-elementary}.

\begin{lemma}
\label{le:pi-proj} 
Let $K$ be a field of positive characteristic $p$ and $A$ a cocommutative 
Hopf $K$-algebra that is isomorphic as an algebra to $K[t_{1},\dots,t_{r}]/(t_{1}^{p},\dots t_{r}^{p})$.  
Let
\[ 
\alpha\col K[t]/(t^p) \lra A
\] 
be a flat homomorphism of $K$-algebras. For any $A$-modules $M$ and $N$, 
the following conditions are equivalent:
\begin{enumerate}[\quad\rm(i)]
\item $\alpha^*(M \otimes_K N)$ is projective.
\item $\alpha^*(\Hom_K(M,N))$ is projective.
\item $\alpha^*(M)$ or $\alpha^*(N)$ is projective.
\end{enumerate}
\end{lemma}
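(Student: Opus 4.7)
The plan is to cite the existing proof of (i)$\iff$(iii), which is due to Carlson for cyclic shifted subgroups \cite[Lemma~6.4]{Carlson:1983a} and established in the present generality by Friedlander and Pevtsova \cite[Lemma~3.9]{Friedlander/Pevtsova:2005a}, and to prove (ii)$\iff$(iii) by a reduction to it, via $K$-linear duality of $A$-modules.

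The principal technical difficulty is that $\alpha$ is only an algebra map, so $\alpha^*$ does not a priori commute with $\otimes_K$ or $\Hom_K$. In the case where $M$ is finite-dimensional this can be bypassed cleanly. Since $A$ is a Hopf algebra, the natural evaluation map gives an isomorphism of $A$-modules
\[
\Hom_K(M,N) \cong M^{\vee}\otimes_K N,
\]
where $M^{\vee} = \Hom_K(M,K)$ carries the contragredient $A$-action defined via the antipode $S_A$. Applying $\alpha^*$ and invoking the already-established (i)$\iff$(iii) for the pair $(M^{\vee}, N)$, condition (ii) for $(M,N)$ becomes the assertion that $\alpha^*M^{\vee}$ or $\alpha^*N$ is projective. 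I would then close the loop by observing that $\alpha^*M^{\vee}$ is projective over $K[t]/(t^p)$ if and only if $\alpha^*M$ is: on $K$-linear duals, the action of $t$ is the transpose of the action of $S_A(\alpha(t))$, which has the same Jordan type as $\alpha(t)$ because $S_A(x) \equiv -x \pmod{\fm_A^2}$ for $x\in\fm_A$, and the Jordan type of a nilpotent operator is all that projectivity over $K[t]/(t^p)$ records. Moreover, since $K[t]/(t^p)$ is a Frobenius algebra, $K$-linear duality at the level of $K[t]/(t^p)$-modules preserves the projective-injective class directly.

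The main obstacle is the general case where $M$ is not finite-dimensional: the isomorphism $\Hom_K(M,N) \cong M^{\vee}\otimes_K N$ then fails. I would handle this by writing $M$ as a filtered colimit of its finite-dimensional $A$-submodules $M = \varinjlim M_i$, turning internal Hom into an inverse limit $\Hom_K(M,N) \cong \varprojlim \Hom_K(M_i,N)$, and exploiting that over the Frobenius algebra $K[t]/(t^p)$ projective modules coincide with injective modules (both being direct sums of copies of $K[t]/(t^p)$), so that arbitrary products of projectives remain projective. A complementary approach, closer in spirit to the original Carlson proof, is to run the filtration argument for (i)$\iff$(iii) directly on $\Hom_K(M,N)$: filter by powers of the augmentation ideal $\fm_A$ of $A$, use the congruence $\Delta_A(\alpha(t)) \equiv \alpha(t)\otimes 1 + 1\otimes\alpha(t) \pmod{\fm_A\otimes\fm_A}$ to reduce to a ``primitive surrogate'' of $\alpha$ on the associated graded, and lift back using extension-closure of projectivity over $K[t]/(t^p)$. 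The key input in either route is the same: the failure of $\alpha$ to respect the Hopf structure is a phenomenon of order $\fm_A^2$ that does not affect projectivity on associated graded.
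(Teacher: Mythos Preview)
Your reduction via $\Hom_K(M,N)\cong M^\vee\otimes_K N$ works cleanly when $M$ is finite-dimensional, but for both $M$ and $N$ infinite-dimensional approach (a) does not close. The inverse-limit argument gives at best (iii)$\Rightarrow$(ii) in the sub-case where $\alpha^*N$ is projective (and even there you need a Mittag--Leffler argument, not merely closure under products). When instead $\alpha^*M$ is projective, the individual $\alpha^*M_i$ need not be, and $\Hom_K(M,-)$ does not commute with filtered colimits in $N$, so no reduction to the finite-dimensional case is available. Worse, (ii)$\Rightarrow$(iii) fails outright by this route: projectivity of $\alpha^*\Hom_K(M,N)$ over $K[t]/(t^p)$ tells you nothing about its quotients $\alpha^*\Hom_K(M_i,N)$, since quotients of free $K[t]/(t^p)$-modules are rarely free.

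Your approach (b) is the right idea and is essentially what the paper does, but the mechanism is not filtration-plus-extension-closure---note that closure of projectives under extensions only goes one way and so cannot deliver (ii)$\Rightarrow$(iii) either. Rather, one applies \cite[Proposition~2.2]{Friedlander/Pevtsova:2005a} directly to $\Hom_K(M,N)$ regarded as an $A\otimes_K A$-module: the diagonal $A$-action of $\alpha(t)$ is through $(1\otimes S_A)\Delta_A(\alpha(t))\equiv \alpha(t)\otimes 1 - 1\otimes\alpha(t)\pmod{I\otimes_K I}$, and that proposition shows the two resulting $K[t]/(t^p)$-structures on $\Hom_K(M,N)$ are \emph{simultaneously} projective or not. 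The second structure is precisely $\Hom_K(\alpha^*M,\alpha^*N)$ computed in the Hopf category of $K[t]/(t^p)$-modules (with $t$ primitive), and there both implications reduce to an elementary check with cyclic summands, valid for arbitrary modules with no finiteness hypothesis.
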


\begin{proof} 
As noted before, (i) $\iff$ (iii) is \cite[Lemma~3.9]{Friedlander/Pevtsova:2005a}; 
the hypotheses of \textit{op.~cit.} includes that $M$ and $N$ are finite dimensional, 
but that is not used in the proof. We employ a similar argument to verify that (ii) 
and (iii) are equivalent.

Let $\sigma\col A\to A$ be the antipode of $A$, $\Delta\col A\to A\otimes_{K}A$ 
its comultiplication, and set $I=\Ker(A\to K)$, the augmentation ideal of $A$.  
Identifying $t$ with its image in $A$, one has
\[ 
(1\otimes\sigma)\Delta(t) = t\otimes 1 - 1\otimes t + w \quad\text{with $w\in I\otimes_{K}I$;}
\] 
see \cite[I.2.4]{Jantzen:2003a}. Recall that the action of $a\in A$ on $\Hom_{K}(M,N)$ 
is given by multiplication with $(1\otimes\sigma)\Delta(a)$, so that for $f\in \Hom_{K}(M,N)$ 
and $m\in M$ one has
\[ 
(a\cdot f)(m) = \sum a'f(\sigma(a'')m) \quad\text{where $\Delta(a)=\sum a'\otimes a''$.}
\] 
Given a module over $A\otimes_{K}A$, we consider two $K[t]/(t^{p})$-structures on it: 
One where $t$ acts via multiplication with $(1\otimes\sigma)\Delta(t)$ and another where 
it acts via multiplication with $t\otimes 1 - 1\otimes t$. We claim that these two $K[t]/(t^{p})$-modules 
are both projective or both not projective. This follows from a repeated use of 
\cite[Proposition~2.2]{Friedlander/Pevtsova:2005a} because $w$ can be represented as 
a sum of products of nilpotent elements of $A\otimes_{K}A$, and each nilpotent element 
$x$ of $A\otimes_{K}A$ satisfies $x^p=0$.

We may thus assume that $t$ acts on $\Hom_{K}(M,N)$ via $t\otimes 1 - 1\otimes t$. 
There is then an isomorphism of $K[t]/(t^{p})$-modules
\[ 
\alpha^{*}(\Hom_{K}(M,N)) \cong \Hom_K(\alpha^{*}(M),\alpha^{*}(N))\,,
\] 
where the action of $K[t]/(t^{p})$ on the right hand side is the one obtained by viewing 
it as a Hopf $K$-algebra with comultiplication defined by $t\mapsto t\otimes 1 + 1\otimes t$ 
and antipode $t\mapsto -t$. It remains to observe that for any $K[t]/(t^{p})$-modules $U,V$,
the module $\Hom_K(U,V)$ is projective if and only if one of $U$ or $V$ is projective.

Indeed, if $U$ or $V$ is projective, so is $\Hom_{K}(U,V)$, by Lemma~\ref{le:end-projectivity}. 
As to the converse, every $K[t]/(t^{p})$-module is a direct sum of cyclic modules, so when $U$ 
and $V$ are not projective, they must have direct summands isomorphic to $K[t]/(t^{u})$ and 
$K[t]/(t^{v})$, respectively, for some $1\le u,v <p$. Then $\Hom_{K}(K[t]/(t^{u}),K[t]/(t^{v}))$ is 
a direct summand of $\Hom_{K}(U,V)$. The former cannot be projective as a $K[t]/(t^{p})$-module 
because its dimension as a $K$-vector space is $uv$, while the dimension of any projective 
module must be divisible by $p$: projective $K[t]/(t^{p})$-modules are free.
\end{proof}

The first part of the result below is \cite[Proposition~5.2]{Friedlander/Pevtsova:2007a}.

\begin{theorem}
\label{th:tensor-and-hom-pi} Let $M$ and $N$ be $kG$-modules. Then
there are equalities
\begin{enumerate}[{\quad\rm(i)}]
\item $\pisupp_{G}(M \otimes_k N) = \pisupp_{G}(M) \cap
\pisupp_{G}(N)$,
\item $\picosupp_{G}(\Hom_k(M,N)) = \pisupp_{G}(M) \cap
\picosupp_{G}(N)$.
\end{enumerate}
\end{theorem}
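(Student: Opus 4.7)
\medskip

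The plan is to handle (i) by citing the existing reference \cite[Proposition~5.2]{Friedlander/Pevtsova:2007a}, and to treat (ii) along exactly parallel lines, with Lemma~\ref{le:pi-proj} playing the role that the equivalence (i)$\iff$(iii) of that lemma plays for support. The key identification that allows us to import Lemma~\ref{le:pi-proj} is the isomorphism of $KG_K$-modules
\[
\Hom_k(M,N)^K \;\cong\; \Hom_K(M_K,N^K)
\]
from Lemma~\ref{le:invariants}(iii), which unwinds the double role of the field extension on the two sides.

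For (ii), fix a point $\fp\in\Proj H^{*}(G,k)$ and a representative $\pi$-point $\alpha_{\fp}\col K[t]/(t^{p})\to KG_K$. By Remark~\ref{re:pi-basics}(3) we may replace $\alpha_{\fp}$ by an equivalent $\pi$-point that factors through $KC$ for some quasi-elementary subgroup scheme $C$ of $G_K$; since equivalence preserves the three projectivity conditions appearing in $\pisupp$ and $\picosupp$, doing so changes none of the three sets in the claimed equality. Restrict $M_K$ and $N^K$ along $KC\hookrightarrow KG_K$ to $KC$-modules; by Example~\ref{ex:quasi-elementary} the algebra $A:=KC$ is a cocommutative Hopf $K$-algebra isomorphic as an algebra to $K[t_{1},\dots,t_{r}]/(t_{1}^{p},\dots,t_{r}^{p})$, so Lemma~\ref{le:pi-proj} applies to the flat map $\alpha_{\fp}\col K[t]/(t^{p})\to A$.

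Combining the identification above with Lemma~\ref{le:pi-proj}, the module
\[
\alpha_{\fp}^{*}\bigl(\Hom_k(K,\Hom_k(M,N))\bigr)\;\cong\;\alpha_{\fp}^{*}\bigl(\Hom_K(M_K,N^K)\bigr)
\]
is projective if and only if $\alpha_{\fp}^{*}(M_K)$ is projective or $\alpha_{\fp}^{*}(N^K)$ is projective. Translating back via the definitions of $\pisupp$ and $\picosupp$, this says
\[
\fp\notin\picosupp_{G}(\Hom_k(M,N))\ \Iff\ \fp\notin\pisupp_{G}(M)\ \text{or}\ \fp\notin\picosupp_{G}(N),
\]
which is the claimed set equality (ii).

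The main obstacle is the one already dealt with in Lemma~\ref{le:pi-proj}: because a $\pi$-point is only an algebra map and not a map of Hopf algebras, restriction along $\alpha_{\fp}$ does not a priori commute with either $\otimes_K$ or $\Hom_K$. The reduction to a quasi-elementary subgroup scheme, followed by the antipode-perturbation argument inside Lemma~\ref{le:pi-proj}, is precisely what lets us replace the genuine $A$-action on $\Hom_K(M_K,N^K)$ by the ``naive'' one coming from a Hopf structure on $K[t]/(t^p)$, after which projectivity of the internal hom reduces to the elementary statement about $K[t]/(t^p)$-modules used at the end of that lemma. Once this is in place, no further work is needed beyond bookkeeping with Lemma~\ref{le:invariants}.
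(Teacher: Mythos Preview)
Your proposal is correct and follows essentially the same approach as the paper's own proof: reduce via Remark~\ref{re:pi-basics}(3) to a $\pi$-point factoring through a quasi-elementary subgroup scheme, invoke the isomorphism $\Hom_k(M,N)^K\cong\Hom_K(M_K,N^K)$ from Lemma~\ref{le:invariants}(iii), and then apply the equivalence (ii)$\iff$(iii) of Lemma~\ref{le:pi-proj}. Your explicit remark that equivalence of $\pi$-points preserves all three relevant projectivity conditions (via Theorem~\ref{th:pi-equivalence}) is a helpful clarification the paper leaves implicit.
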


\begin{proof} 
We prove part (ii). Part (i) can be proved in the same fashion; see 
\cite[Proposition~5.2]{Friedlander/Pevtsova:2007a}.

Fix a $\pi$-point $\alpha\colon K[t]/(t^{p})\to KG_K$. By Remark~\ref{re:pi-basics}(3), 
we can assume $\alpha$ factors as $K[t]/(t^{p})\to KC\to KG_K$, where $C$ is a 
quasi-elementary subgroup scheme of $G_{K}$. As noted in Lemma~\ref{le:invariants}(iii), 
there is an isomorphism of $KG_K$-modules
\[ 
\Hom_k(M,N)^K \cong \Hom_K(M_{K},N^{K})\,.
\] 
We may restrict a $KG_K$ module to $K[t]/(t^{p})$ by first restricting to $KC$, 
and $\Hom_{k}(-,-)$ commutes with this operation. Thus the desired result follows 
from the equivalence of (ii) and (iii) in Lemma~\ref{le:pi-proj}, applied to the map 
$K[t]/(t^{p})\to KC$, keeping in mind the structure of $KC$; see Example~\ref{ex:quasi-elementary}.
\end{proof}

\subsection*{Basic computations} Next we record, for later use, some elementary 
observations concerning support and cosupport. The converse of (i) in the result 
below also holds. For finite groups this is proved in Theorem~\ref{th:finite-groups} 
below, and for finite group schemes this is one of the main results of \cite{Benson/Iyengar/Krause/Pevtsova:2015b}.

\begin{lemma}
\label{le:cosupp-k} 
Let $M$ be a $kG$-module.
\begin{enumerate}[{\quad\rm(i)}]
\item $\pisupp_{G}(M)=\varnothing=\picosupp_{G}(M)$ when $M$ is
projective.
\item $\picosupp_{G}(M)=\pisupp_{G}(M)$ when $M$ is finite
dimensional.
\item $\pisupp_{G}(k)=\Proj H^{*}(G,k) =\picosupp_{G}(k)$.
\end{enumerate}
\end{lemma}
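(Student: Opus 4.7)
The plan is to handle the three parts separately, using the structural properties collected earlier: that $kG$ is Frobenius (projective equals injective), that the base-change and coinduction functors behave well with respect to projectivity, and that a flat map from $K[t]/(t^p)$ means free as a $K[t]/(t^p)$-module.

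For part (i), suppose $M$ is projective over $kG$. I would first observe that $M_K = K \otimes_k M$ is projective over $KG_K$, since $KG_K = K \otimes_k kG$ is a free $kG$-module. For $M^K$, I would use that $kG$ is a finite-dimensional Hopf $k$-algebra and hence Frobenius, so $M$ is injective; the functor $(-)^K = \Hom_k(K,-)$ is right adjoint to the exact restriction-of-scalars functor along $kG \to KG_K$, hence preserves injectives, so $M^K$ is injective over $KG_K$; applying the Frobenius property of $KG_K$ again, $M^K$ is projective. Now for any $\pi$-point $\alpha\col K[t]/(t^p)\to KG_K$, the defining flatness of $\alpha$ makes $KG_K$ into a flat module over the local Artinian ring $K[t]/(t^p)$, hence a free one. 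Restriction along $\alpha$ therefore preserves projectivity, so $\alpha^*(M_K)$ and $\alpha^*(M^K)$ are projective for every $\pi$-point $\alpha$. This forces $\pisupp_G(M) = \varnothing = \picosupp_G(M)$.

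For part (ii), the point is to compare $\alpha^*(M_K)$ with $\alpha^*(M^K)$ for each $\pi$-point $\alpha$. By Remark~\ref{re:finite-dimensional}, if $M$ is finite dimensional over $k$, then the natural map induces an isomorphism $M^K \cong \Hom_k(K,k)\otimes_k M$ of $KG_K$-modules, and the $K$-vector space $\Hom_k(K,k)$ is a (possibly infinite) direct sum of copies of $K$. Consequently $M^K$ is a direct sum of copies of $M_K$ as a $KG_K$-module, and $\alpha^*(M_K)$ is projective if and only if $\alpha^*(M^K)$ is projective. The required equality of $\pi$-support and $\pi$-cosupport follows.

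For part (iii), I would compute directly on $M=k$. For any $\pi$-point $\alpha\col K[t]/(t^p)\to KG_K$, one has $K\otimes_k k \cong K$, and $\alpha^*(K)$ is the trivial $K[t]/(t^p)$-module, which is not projective because $K[t]/(t^p)$ is local Artinian with nontrivial radical and the only projectives are free. Hence every $\fp\in\Proj H^*(G,k)$ lies in $\pisupp_G(k)$. Since $k$ is finite dimensional, part (ii) then yields $\picosupp_G(k)=\pisupp_G(k)=\Proj H^*(G,k)$.

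None of these three steps is a serious obstacle; the most delicate point is keeping track in (i) that $(-)^K$ preserves projectivity without going through Remark~\ref{re:finite-dimensional}, and this is handled cleanly by passing through injectivity using the Frobenius property of the group algebras on both sides.
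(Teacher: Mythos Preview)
Your proof is correct and follows essentially the same route as the paper's: part (ii) via Remark~\ref{re:finite-dimensional}, part (iii) by computing $\alpha^*(k_K)=K$ directly and then invoking (ii), and part (i) by observing that both $M_K$ and $M^K$ are projective over $KG_K$ and that restriction along a flat map preserves projectivity. The only difference is that you spell out, via the Frobenius property and adjunction, why $M^K$ is projective, whereas the paper simply asserts this; your justification is the natural one.
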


\begin{proof} Part (ii) is immediate from definitions, given
Remark~\ref{re:finite-dimensional}. For the rest, fix a $\pi$-point
$\alpha\col K[t]/(t^{p})\to KG_K$.

 (i) When $M$ is projective, so are the $KG_K$-modules $K\otimes_{k}M$
and $\Hom_k(K,M)$, and restriction along $\alpha$ preserves
projectivity, as $\alpha$ is a flat map. This justifies (i).

(iii) Evidently, $k_{K}$ equals $K$ and $\alpha^*(K)$ is
non-projective. Since $\alpha$ was arbitrary, one deduces that
$\pisupp_{G}(k)$ is all of $\Proj H^{*}(G,k)$. That this also equals
the $\pi$-cosupport of $k$ now follows from (ii).
\end{proof}

\begin{corollary}
\label{co:cosuppM*} If $M$ is a $kG$-module, then there is an equality
\[ \picosupp_{G}(\Hom_{k}(M,k))=\pisupp_{G}(M).
\]
\end{corollary}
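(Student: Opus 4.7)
The plan is to deduce this as an immediate consequence of Theorem~\ref{th:tensor-and-hom-pi}(ii) together with Lemma~\ref{le:cosupp-k}(iii). More precisely, I would apply part (ii) of Theorem~\ref{th:tensor-and-hom-pi} with $N=k$, which yields
\[
\picosupp_{G}(\Hom_{k}(M,k)) = \pisupp_{G}(M) \cap \picosupp_{G}(k).
\]
Then I would invoke Lemma~\ref{le:cosupp-k}(iii), which says $\picosupp_{G}(k) = \Proj H^{*}(G,k)$, so intersecting with it does nothing. Since $\pisupp_{G}(M)$ is by definition a subset of $\Proj H^{*}(G,k)$, the intersection is simply $\pisupp_{G}(M)$, giving the claimed equality.

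There is essentially no obstacle here: all the substantive work has already been done in Theorem~\ref{th:tensor-and-hom-pi}, whose proof required the technical Lemma~\ref{le:pi-proj} to handle the failure of $\pi$-points to preserve Hopf structure. The present corollary is the specialization to the dualizing object $N=k$, and serves mainly as a convenient reformulation to be used later (for instance in relating $\pi$-support and $\pi$-cosupport of a module to those of its $k$-linear dual).
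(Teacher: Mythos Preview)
Your proposal is correct and matches the paper's own proof exactly: the paper likewise derives the corollary immediately from Theorem~\ref{th:tensor-and-hom-pi}(ii) applied with $N=k$ together with Lemma~\ref{le:cosupp-k}(iii).
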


\begin{proof} 
This follows from Theorem~\ref{th:tensor-and-hom-pi}\,(ii) and Lemma~\ref{le:cosupp-k}\,(iii).
\end{proof}

The equality of $\pi$-support and $\pi$-cosupport for finite dimensional modules,  which holds by Lemma~\ref{le:cosupp-k}(ii), may fail for infinite dimensional modules. We describe an example over the Klein  four group; see Example~\ref{ex:T(I)} for a general construction.

\begin{example}
\label{ex:klein2} Let $V$ be the Klein four group $\bbZ/2 \times
\bbZ/2$ and $k$ a field of characteristic two. The $\pi$-points of
$kV$ were described in Example~\ref{ex:klein}. We keep the notation
from there. Let $M$ be the infinite dimensional $kG$-module with basis
elements $u_i$ and $v_i$ for $i\ge 0$ and $kG$-action defined by
\[ 
xu_i=v_i,\qquad yu_i = v_{i-1}, \qquad xv_i=0,\qquad yv_i=0
\] 
where $v_{-1}$ is interpreted as the zero vector. A diagram for this module is as follows:
\[
\begin{tikzcd}[column sep=small] 
\overset{u_0}{\circ}
\arrow[dash]{dr}[swap]{x} && \overset{u_1}{\circ}\arrow[dash]{dl}{y}
\arrow[dash]{dr} && \overset{u_2}{\circ} \arrow[dash]{dl}
\arrow[dash]{dr}&&\dots \\
&\underset{v_0}{\circ}&&\underset{v_1}{\circ}&&
\underset{v_2}{\circ}&&\dots
\end{tikzcd}
\]
\begin{claim} 
The $\pi$-support of $M$ is the closed point $\{[0,1]\}$ of $\bbP^{1}_{k}$ whilst its $\pi$-cosupport contains also the generic
point.
\end{claim}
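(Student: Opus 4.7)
The plan is to treat $\pi$-support and $\pi$-cosupport in turn. For $\pi$-support, I would check each $\pi$-point of $V$ directly, computing the kernel and image of $t$ on $M_K$, where $t$ is the image of the generator of $K[t]/(t^2)$. For a rational point $[a,b]\in\bbP^1_k$ the operator $t=ax+by$ acts by $u_i\mapsto av_i+bv_{i-1}$ and $v_i\mapsto 0$, and a short case analysis gives $\operatorname{im}(t)=\operatorname{span}_k(v_i)$ in every case, while $\ker(t)=\operatorname{span}_k(v_i)$ unless $a=0$, in which case $u_0$ joins the kernel. So $\alpha^{*}(M_K)$ has a non-projective summand precisely when $a=0$, giving $\pisupp_V(M)=\{[0,1]\}$. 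The generic point $\eta$, with $K=k(s)$ and $t=x+sy$, is not in $\pisupp_V(M)$: for a kernel element the coefficients $c_i$ of $u_i$ would have to satisfy $c_i=-sc_{i+1}$, which together with finite support forces all $c_i$ to vanish, so $\ker(t|_{M_K})=\operatorname{im}(t|_{M_K})=\operatorname{span}_K(v_i)$.

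For $\pi$-cosupport, the inclusion $[0,1]\in\picosupp_V(M)$ is immediate: this point is rational, so $M^k=M=M_k$ and the calculation above applies. More generally, any closed point of $\bbP^1_k$ is represented by a $\pi$-point over a finite extension $K/k$, and Remark~\ref{re:finite-dimensional} identifies $\alpha^{*}(M^K)$ with a direct sum of copies of $\alpha^{*}(M_K)$; thus $\picosupp_V(M)$ and $\pisupp_V(M)$ coincide on all closed points.

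The substantive step is to show $\eta\in\picosupp_V(M)$. Unwinding the $KV_K$-module structure on $M^K=\Hom_k(K,M)$, the action of $t=x+sy$ is
\[
(tf)(\mu) = xf(\mu) + yf(s\mu),
\]
so every element of $\operatorname{im}(t|_{M^K})$ takes its values in $\operatorname{span}_k(v_0,v_1,\dots)$. Hence it suffices to produce an $f\in\ker(t|_{M^K})$ whose values involve the $u_i$. Writing $f(\mu)=\sum_{i\ge 0} a_i(\mu)u_i$ with $a_i\in\Hom_k(K,k)$, the kernel condition reduces in characteristic two to the recursion $a_{i+1}(\mu)=a_i(\mu/s)$, and hence $a_i(\mu)=a_0(\mu/s^i)$.

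I expect the main obstacle to be choosing a nonzero $a_0\in\Hom_k(K,k)$ with the property that, for each fixed $\mu\in K$, $a_0(\mu/s^i)=0$ for all but finitely many $i$; this finiteness is exactly what is required for $f(\mu)$ to define an element of $M$ rather than a formal infinite sum. I would resolve this via partial fraction decomposition over $k(s)$: writing $\mu = p(s) + \mu_{\mathrm{proper}}$ with $p\in k[s]$ and $\mu_{\mathrm{proper}}$ a sum of proper fractions, the polynomial part of $\mu/s^i$ equals $\sum_{j\ge i} p_j s^{j-i}$, while $\mu_{\mathrm{proper}}/s^i$ remains a sum of proper fractions. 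Taking $a_0$ to extract the constant term of the polynomial part gives $a_0(\mu/s^i)=p_i$, which vanishes once $i>\deg p$, while $a_0(1)=1$. The corresponding $f$ is a nonzero element of $\ker(t|_{M^K})\setminus\operatorname{im}(t|_{M^K})$, so $\alpha^{*}(M^K)$ is not projective, and hence $\eta\in\picosupp_V(M)$.
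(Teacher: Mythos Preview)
Your proof is correct and follows essentially the same approach as the paper. Both you and the paper compute kernels and images of $ax+by$ on $M_K$ directly for the support, and both construct the same explicit element $f\in\Hom_k(k(s),M)$ via the polynomial part of the partial-fraction expansion for the cosupport at the generic point; your presentation is a bit more systematic in that you first derive the recursion $a_{i+1}(\mu)=a_i(\mu/s)$ and then solve it, whereas the paper simply writes down $f$ and leaves the verification that $(x+sy)f=0$ to the reader.
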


Given a finite field extension $K$ of $k$, it is not hard to verify
that for any rational point $[a,b]\in\bbP^{1}_{K}$ the image of
multiplication by $ax+by$ on $M_K$ is its socle, that is spanned by
the elements $\{v_{i}\}_{i\ges 0}$. For $[a,b] \ne [0,1]$, this is
also the kernel of $ax+by$, whilst for $[a,b]=[0,1]$ it contains, in
addition, the element $u_{0}$. In view of
Remark~\ref{re:finite-dimensional}, this justifies the assertions
about the closed points of $\bbP^1_k$.

For the generic point let $K$ denote the field of rational functions
in a variable $s$. It is again easy to check that the kernel of $x+sy$
and the image of multiplication by $x+sy$ on $M_{K}$ are equal to its
socle. So the generic point is not in $\pisupp_{V}(M)$.

For cosupport, consider the $k$-linear map $f\colon K \to M$ defined
as follows. Given a rational function $\phi(s)$, its partial fraction
expansion consists of a polynomial $\psi(s)$ plus the negative parts
of the Laurent expansions at the poles. If
$\psi(s)=\alpha_0+\alpha_1s+\cdots$ we define $f(\phi)$ to be
$\alpha_0u_0+\alpha_1u_1+\cdots$.  By definition,
$(x+sy)(f)(a)=xf(a)+yf(sa)$; using this it is easy to calculate that
$f$ is in the kernel of $x+sy$. On the other hand, any function in the
image of $x+sy$ lands in the socle of $M$. It follows that the kernel
of $x+sy$ is strictly larger than the image, and so the generic point
is in the $\pi$-cosupport of $M$.
\end{example}

\section{Finite groups} 
\label{se:finite-groups}
The focus of the rest of the paper is on finite groups. In this section 
we prove that a module over a finite group is projective if and only if 
it has empty $\pi$-cosupport. The key ingredient is a version of Dade's 
lemma for elementary abelian groups. This in turn is based on an analogue 
of the Kronecker quiver lemma~\cite[Lemma~4.1]{Benson/Carlson/Rickard:1996a}.

\begin{lemma}
\label{le:kronecker} 
Let $k$ be an algebraically closed field, $\ell$ a non-trivial extension field of $k$, and let $V,W$ be $k$-vector spaces. If there exist $k$-linear maps $f,g\colon V\to W$ with the property that for every pair of scalars $\lambda$ and $\mu$ in $\ell$, not both zero, the linear map $\lambda f + \mu g \col \Hom_k(\ell,V) \to \Hom_k(\ell,W)$ is an isomorphism, then $V=0=W$.
\end{lemma}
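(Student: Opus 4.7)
The plan is to argue by contradiction: assuming $V \ne 0$, I will exhibit a nonzero $\phi \in \Hom_k(\ell, V)$ in the kernel of some $\lambda f + \mu g$, contradicting the assumption that this operator is an isomorphism. Once $V = 0$ is established, the source $\Hom_k(\ell, V)$ vanishes, and the isomorphism hypothesis automatically forces $\Hom_k(\ell, W) = 0$, hence $W = 0$.

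First I would identify $W$ with $V$ via $f$. Fix a $k$-linear map $\varpi\col \ell \to k$ with $\varpi(1) = 1$. Taking $(\lambda, \mu) = (1, 0)$, the hypothesis says post-composition with $f$ is an isomorphism $\Hom_k(\ell, V) \to \Hom_k(\ell, W)$. If $v \in \Ker f$ were nonzero, then $\phi_v(x) := \varpi(x)\, v$ would be a nonzero element in the kernel of this isomorphism, which is excluded; dually, for $w \in W \setminus \Im f$ the map $x \mapsto \varpi(x)\, w$ would lie outside its image. Hence $f\col V \to W$ is a $k$-linear isomorphism. After identifying $W$ with $V$ via $f$, the hypothesis becomes: for every $(\lambda, \mu) \in \ell^2$ with $(\lambda, \mu) \ne (0,0)$, the operator $\lambda \cdot \id + \mu \cdot h_*$ on $\Hom_k(\ell, V)$ is an isomorphism, where $h := f^{-1} g\col V \to V$. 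Applying the same pullback argument to the case $(\lambda, \mu) = (-c, 1)$ shows that $h - c \cdot \id\col V \to V$ is an isomorphism for every $c \in k$. Since $k$ is algebraically closed, every nonzero $p \in k[T]$ factors as $\prod(T - c_i)$ with $c_i \in k$, so $p(h)$ is an isomorphism of $V$.

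The main step is to construct a ``generalised eigenvector'' for $h_*$ with eigenvalue in $\ell \setminus k$. Pick any $\alpha \in \ell \setminus k$; since $k$ is algebraically closed, $\alpha$ is transcendental over $k$, so $k(\alpha) \subseteq \ell$ is a subfield. Fix a nonzero $v \in V$ and define
\[
\phi_0\col k(\alpha) \lra V, \qquad a(\alpha)/b(\alpha) \longmapsto b(h)^{-1} a(h)\, v.
\]
This is well defined and $k$-linear because every $b(h)$ with $b \ne 0$ is invertible on $V$, and one verifies directly that $\phi_0(\alpha x) = h\, \phi_0(x)$ for $x \in k(\alpha)$. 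Viewing $\ell$ as a $k(\alpha)$-vector space, choose a $k(\alpha)$-linear subspace $U \subseteq \ell$ with $\ell = k(\alpha) \oplus U$, and extend $\phi_0$ by zero on $U$ to obtain a $k$-linear map $\phi\col \ell \to V$. Then $\phi \ne 0$ since $\phi(1) = v$, and because $U$ is automatically stable under multiplication by $\alpha$, the intertwining relation $\phi(\alpha x) = h\, \phi(x)$ extends from $k(\alpha)$ to all of $\ell$, giving $(h_* - \alpha \cdot \id)(\phi) = 0$. This contradicts the isomorphism hypothesis for the case $(\lambda, \mu) = (-\alpha, 1)$.

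I expect this last extension step to be the main obstacle. A naive attempt to extend $\phi_0$ $k$-linearly by specifying values on a $k$-basis of $\ell$ would destroy the intertwining relation, since multiplication by $\alpha$ need not preserve such a basis. The key observation is that $\ell$ carries a $k(\alpha)$-vector space structure precisely because it is a field containing $\alpha$; a complement chosen inside this richer structure is automatically $\alpha$-stable, which is what makes the extension compatible. The algebraic closedness hypothesis enters only at the step of splitting polynomials into linear factors to invert $p(h)$.
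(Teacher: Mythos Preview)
Your proof is correct and follows the same overall strategy as the paper: use one of $f,g$ to identify $V$ with $W$, observe that invertibility of the operator for scalars in $k$ forces $V$ to be a $k(t)$-module via the remaining map, and then exhibit a nonzero kernel element for the operator attached to a transcendental scalar. The difference lies in how this last step is carried out. The paper first reduces to $\ell=k(s)$ and, rather than writing down a kernel element, dualises the short exact sequence
\[
0\lra k(t)\otimes_k k(s)\xra{\ t-s\ }k(t)\otimes_k k(s)\lra k(t)\lra 0
\]
to see that $t-s$ is not injective on $\Hom_k(k(s),k(t))$, and then uses that $k(t)$ is a direct summand of $V$ as a $k(t)$-module. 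Your version is more hands-on: you construct the kernel element $\phi$ explicitly, extending $\phi_0$ from $k(\alpha)$ to $\ell$ via a $k(\alpha)$-linear complement, which lets you skip the reduction to $\ell=k(s)$ altogether. Under the hood the two arguments coincide---your $\phi_0$ is exactly the element that the paper's exact sequence produces---but yours is packaged as elementary linear algebra while the paper's is phrased homologically.
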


\begin{remark} 
Note that $\lambda f + \mu g$ here really means $\Hom_{k}(\lambda,f)+\Hom_{k}(\mu,g)$ since this is the way $\ell$ acts on
homomorphisms.
\end{remark}

\begin{proof} 
Since $k$ is algebraically closed, we may as well assume that $\ell=k(s)$, a simple transcendental extension of $k$, since further
extending the field only strengthens the hypothesis without changing the conclusion.

Use $g$ to identify $V$ and $W$. Then $f$ is a $k$-endomorphism of $V$ with the property that for all $\mu\in \ell$ the endomorphism $\Hom_{k}(1,f) + \Hom_{k}(\mu, \id)$ of $\Hom_k(\ell,V)$ is invertible. The action of $f$ makes $V$ into a $k[t]$-module, with $t$ acting as $f$ does. Since
$f+\mu.\id$ is invertible for all $\mu\in k$, and $k$ is algebraically closed, $V$ is a $k(t)$-module.

Consider the homomorphism $k(t)\otimes_{k}k(s)\to k(t)$ of rings that
is induced by the assignment $p(t)\otimes q(s)\mapsto p(t)q(t)$. It is
not hard to verify that its kernel is the ideal generated by $t-s$, so
there is an exact sequence of $k(t)\otimes_{k}k(s)$-modules
\[ 
0\lra k(t)\otimes_{k}k(s) \xra{\ t - s\ } k(t)\otimes_{k}k(s) \lra k(t) \lra 0
\] 
Applying $\Hom_{k(t)}(-,k(t))$ and using adjunction yields an exact sequence
\[ 
0\lra k(t) \lra \Hom_{k}(k(s),k(t)) \xra{\ t - s\ } \Hom_{k}(k(s),k(t)) \lra 0\,.
\] 
Thus $t-s$ is not invertible on $\Hom_{k}(k(s),k(t))$.  If $V$ is nonzero then it has $k(t)$ as a summand as a $k(t)$-module, so
that $t-s$, that is to say, $\Hom_{k}(1,t)+\Hom_{k}(-s, \id)$, is not invertible. This contradicts the hypothesis.
\end{proof}

\subsection*{Dade's lemma for cosupport} 
The result below, and its proof, are modifications of \cite[Theorem~5.2]{Benson/Carlson/Rickard:1996a}. Given an $k$-algebra $R$ and an extension field $K$ of $k$, we write $R_{K}$ for the $K$-algebra $K\otimes_{k}R$ and
$M^{K}$ for the $R_{K}$-module $\Hom_{k}(K,M)$.

\begin{theorem}
\label{th:DadeHom} Let $k$ be a field of positive characteristic $p$
and set
\[ 
R=k[t_1,\dots,t_r]/(t_1^p,\dots,t_r^p).
\] 
Let $K$ be an algebraically closed field of transcendence degree at
least $r-1$ over $k$. Then an $R$-module $M$ is projective if and only
if for all flat maps $\alpha\col K[t]/(t^r)\to R_K$ the module
$\alpha^*(M^K)$ is projective.
\end{theorem}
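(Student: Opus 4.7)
The forward direction is straightforward: $R$ is a Frobenius algebra and hence self-injective, so $M$ projective implies $M$ injective. The functor $\Hom_k(K,-)$ is right adjoint to the exact restriction functor $\Mod R_K \to \Mod R$ and hence preserves injectives, so $M^K$ is injective as an $R_K$-module; since $R_K$ is also Frobenius, $M^K$ is projective. Flat pullback along $\alpha$ then preserves projectivity.

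For the converse I would induct on $r$. The base case $r = 1$ uses the structure theorem for modules over the uniserial ring $k[t_1]/(t_1^p)$: if $M$ is non-projective then it admits a summand $R/(t_1^j)$ with $j < p$, so $\Hom_k(K, R/(t_1^j))$ is a direct summand of $M^K$ annihilated by $t_1^j$, hence not projective over $R_K = K[t_1]/(t_1^p)$. The tautological $\pi$-point $\alpha(t) = t_1$ then witnesses the failure.

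For the inductive step from $r-1$ to $r$, write $R = R' \otimes_k k[t_r]/(t_r^p)$ with $R' = k[t_1,\dots,t_{r-1}]/(t^p)$, choose an algebraically closed subfield $K_0 \subseteq K$ of transcendence degree $r-2$ over $k$, and pick $s \in K$ transcendental over $K_0$. First I would apply the induction hypothesis to $R'$ with the field $K_0$: any flat $\beta\colon K_0[t]/(t^p) \to R'_{K_0}$ extends, by base change to $K$ composed with the flat inclusion $R'_K \hookrightarrow R_K$, to a flat $\alpha\colon K[t]/(t^p) \to R_K$; the hypothesis gives $\alpha^*(M^K)$ projective, and using the adjunction isomorphism $M^K \cong \Hom_{K_0}(K, M^{K_0})$ this descends to projectivity of $\beta^*(M^{K_0})$ over $K_0[t]/(t^p)$ (a non-free summand $K_0[t]/(t^j)$ with $j<p$ of $\beta^*(M^{K_0})$ would yield, via $\Hom_{K_0}(K,-)$, a summand of $\alpha^*(M^K)$ annihilated by $t^j$, contradicting projectivity). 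This establishes $R'$-projectivity of $M$. To close the step, I would invoke the Kronecker quiver lemma (Lemma~\ref{le:kronecker}) over $K_0$ with $\ell = K_0(s)$, taking $f, g$ to be the $K_0$-linear maps on appropriate subquotients $V, W$ of $M$ induced by multiplication by $t_{r-1}$ and $t_r$---subquotients chosen so that their vanishing upgrades $R'$-projectivity to $R$-projectivity. The $\pi$-points $\alpha_{\lambda,\mu}(t) = \lambda t_{r-1} + \mu t_r$ for $(\lambda,\mu) \in \ell^2 \setminus 0$, together with the hypothesis, should supply the invertibility of $\lambda f + \mu g$ on $\Hom_{K_0}(\ell, V) \to \Hom_{K_0}(\ell, W)$, whereupon the lemma forces $V = W = 0$.

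The main obstacle I anticipate is this final step: identifying subquotients $V, W$ whose vanishing precisely captures the passage from $R'$-projectivity to $R$-projectivity, and verifying that the cosupport-style hypothesis on $\alpha^*(M^K)$ translates---after restriction to these subquotients---into the invertibility hypothesis of Lemma~\ref{le:kronecker}. This is where the linear-algebraic Kronecker input meets the module-theoretic conclusion, and carrying out the translation rigorously is the delicate technical heart of the proof.
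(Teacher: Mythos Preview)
Your forward direction and base case are correct, and the overall architecture---induction on $r$, closed by Lemma~\ref{le:kronecker}---matches the paper. But the gap you anticipate in the final step is real, and your proposed inputs to the Kronecker lemma point in the wrong direction.

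Taking $f,g$ to be induced by multiplication by $t_{r-1}$ and $t_r$ on subquotients of $M$ cannot work: projectivity of $\alpha_{\lambda,\mu}^*(M^K)$ says that $M^K$ is free over $K[u]/(u^p)$ for $u=\lambda t_{r-1}+\mu t_r$, and the resulting kernel and image filtrations on $M^K$ move with $(\lambda,\mu)$, so there is no fixed pair $V,W$ on which this becomes an invertibility statement. Your separately established $R'$-projectivity (the argument for which is fine) does not feed in either: it concerns the first $r-1$ generators, while the $\alpha_{\lambda,\mu}$ you list see only the last two, and nothing links the two pieces.

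The paper's resolution is cohomological and replaces your two-stage plan entirely. One takes $V=\Ext^1_R(k,M)$ and $W=\Ext^3_R(k,M)$; since $R$ is local Artinian self-injective, $V=0$ is precisely the projectivity of $M$. The maps $f,g$ are Yoneda multiplication by the Bocksteins $\beta(\eps),\beta(\gamma)\in\Ext^2_R(k,k)$ of two independent classes $\eps,\gamma\in\Ext^1_R(k,k)$. Crucially, the induction hypothesis is applied not once to a fixed $R'$ but for \emph{each} $(\lambda,\mu)\in\ell^2\setminus\{0\}$ to the codimension-one subalgebra $S_{\lambda,\mu}\subseteq R_\ell$ cut out by the linear form $\lambda^{1/p}\eps+\mu^{1/p}\gamma$; this yields $S_{\lambda,\mu}$-projectivity of $M^\ell$. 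The class $\lambda\beta(\eps)+\mu\beta(\gamma)$ is represented by a two-step extension with middle terms $R_\ell\otimes_{S_{\lambda,\mu}}\ell$; after tensoring with $M^\ell$ the $S_{\lambda,\mu}$-projectivity makes those middle terms $R_\ell$-projective, so the extension realises a stable isomorphism $\Omega^2(M^\ell)\cong M^\ell$, and hence $\lambda f+\mu g$ is an isomorphism on $\Ext$. This Bockstein--periodicity mechanism is the missing bridge; it cannot be replaced by the nilpotent operators $t_i$ acting on $M$ itself.
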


\begin{proof} If $M$ is projective as an $R$-module, then $M^K$ is
projective as an $R_{K}$-module, and because $\alpha$ is flat, it
follows that $\alpha^*(M^K)$ is projective.

Assume that $\alpha^*(M^K)$ is projective for all $\alpha$ as in the
statement of the theorem. We verify that $M$ is projective as an
$R$-module by induction on $r$, the case $r=1$ being trivial. Assume
$r\ge 2$ and that the theorem is true with $r$ replaced by $r-1$.

It is easy to verify that the hypothesis and the conclusion of the
result are unchanged if we pass from $k$ to any extension field in
$K$.  In particular, replacing $k$ by its algebraic closure in $K$ we
may assume that $k$ is itself algebraically closed. The plan is to use
Lemma~\ref{le:kronecker} to prove that $\Ext^{1}_{R}(k,M)=0$. Since
$R$ is Artinian it would then follow that $M$ is injective, and hence
also projective, because $R$ is a self-injective algebra.

We first note that for any extension field $\ell$ of $k$, tensoring with
$\ell$ gives a one-to-one map $\Ext^{*}_{R}(k,k)\to
\Ext^{*}_{R_{\ell}}(\ell,\ell)$, which we view as an inclusion, and a natural
isomorphism of $\ell$-vector spaces
\[ 
\Ext^i_R(k,M)^\ell \cong \Ext^i_{R_\ell}(\ell,M^\ell)\quad\text{for $i\in\bbZ$.}
\] 
These remarks will be used repeatedly in the argument below. They imply, for example, that $\Ext^{i}_{R}(k,M)=0$ if and only if
$\Ext^i_{R_\ell}(\ell,M^\ell)=0$.

Let $\beta\col \Ext^1_R(k,k)\to \Ext^2_R(k,k)$ be the Bockstein map; see \cite[\S4.3]{Benson:1998c} or, for a slightly different approach,
\cite[I.4.22]{Jantzen:2003a}. Recall that this map is semilinear through the Frobenius map, in the sense that
\[ 
\beta(\lambda\eps)=\lambda^p\beta(\eps)\quad\text{for $\eps\in \Ext^{1}_{R}(k,k)$ and $\lambda\in k$}.
\]

Fix an extension field $\ell$ of $k$ in $K$ that is algebraically closed
and of transcendence degree $1$.  Choose linearly independent elements
$\eps$ and $\gamma$ of $\Ext^1_R(k,k)$.  The elements $\beta(\eps)$
and $\beta(\gamma)$ of $\Ext^{2}_{R}(k,k)$ induce $k$-linear maps
\[ 
f,g\col \Ext^1_{R}(k,M) \lra \Ext^3_{R}(k,M)\,.
\] 
Let $\lambda$ and $\mu$ be elements in $\ell$, not both zero, and
consider the element
\begin{equation}
\label{eq:deg1elt} \lambda^{1/p}\eps + \mu^{1/p}\gamma \in
\Ext^1_{R_\ell}(\ell,\ell)\cong \Hom_\ell(J/J^2,\ell)\,,
\end{equation} where $J$ is the radical of the ring $R_{\ell}$. It
defines a linear subspace of codimension one in the $\ell$-linear span of
$t_1,\dots,t_r$ in $R_{\ell}$. Let $S$ be the subalgebra of $R_{\ell}$
generated by this subspace and view $M^{\ell}$ as an $S$-module, by restriction of
scalars.

\begin{claim} 
As an $S$-module, $M^{\ell}$ is projective.
\end{claim}

Indeed, $S$ is isomorphic to
$\ell[z_{1},\dots,z_{r-1}]/(z_{1}^{p},\dots,z_{r-1}^{p})$, as an
$\ell$-algebra. It is not hard to verify that the hypotheses of the
theorem apply to the $S$-module $M^{\ell}$ and the extension field
$\ell\subset K$. Since the transcendence degree of $K$ over $\ell$ is $r-2$,
the induction hypothesis yields that the $S$-module $M^{\ell}$ is
projective, as claimed.\medskip

We give $R_\ell$ the structure of a Hopf algebra by making the generators $t_i$ primitive: that is, comultiplication 
is determined by the map $t_i \mapsto  t_i \otimes 1 + 1 \otimes t_i$ and the antipode is determined by the map 
$t_i \mapsto -t_i$. Note that $R_{\ell}\otimes_{S}\ell$ has a natural
structure of an $R_{\ell}$-module, with action induced from the left hand
factor.

\begin{claim} 
  The $R_{\ell}$-module $(R_{\ell}\otimes_{S}\ell)\otimes_{\ell}M^{\ell}$, with the
  diagonal action, is projective.
\end{claim}

Indeed, it is not hard to see the comultiplication on $R_\ell$ induces one on $S$, so the latter is a sub Hopf-algebra of the former. As $M^{\ell}$
is projective as an $S$-module, by the previous claim, so is $\Hom_{\ell}(M^{\ell},N)$ for any $S$-module $N$; see Lemma~\ref{le:end-projectivity}. Since projective $S$-modules are injective, the desired claim is then a consequence of the following standard isomorphisms of functors
\begin{align*} 
\Hom_{R_{\ell}}((R_{\ell}\otimes_{S}\ell)\otimes_{\ell}M^{\ell},-)
&\cong \Hom_{R_{\ell}}(R_{\ell}\otimes_{S}\ell,\Hom_{\ell}(M^{\ell},-)) \\ &\cong
\Hom_{S}(\ell,\Hom_{\ell}(M^{\ell},-))
\end{align*} 
on the category of $R_{\ell}$-modules.\medskip

The Bockstein of the element \eqref{eq:deg1elt} is
\[ 
\lambda \beta(\eps)+\mu\beta(\gamma) \in \Ext^2_{R_\ell}(\ell,\ell),
\] 
and is represented by an exact sequence of the form
\begin{equation}
\label{eq:ext2'} 
0 \lra \ell \lra R_{\ell} \otimes_{S} \ell \lra R_{\ell} \otimes_{S} \ell \lra \ell \lra 0\,.
\end{equation} 
For any $R_{\ell}$-module $N$ the Hopf algebra structure on $R_{\ell}$
gives a map of $\ell$-algebras
$\Ext^{*}_{R_{\ell}}(\ell,\ell) \to \Ext^{*}_{R_{\ell}}(N,N)$ such
that the two actions of $\Ext^{*}_{R_{\ell}}(\ell,\ell)$ on
$\Ext^{*}_{R_{\ell}}(\ell,N)$ coincide, up to the usual
sign~\cite[Corollary~3.2.2]{Benson:1998b}. What this entails is that
the map
\[ 
\lambda f + \mu g \col \Ext^1_{R_\ell}(\ell,M^\ell) \lra \Ext^3_{R_\ell}(\ell,M^\ell)
\] 
may be described as splicing with the extension
\[ 
0 \lra M^{\ell} \lra (R_{\ell}\otimes_{S}\ell)\otimes_{\ell} M^{\ell} \lra
(R_{\ell}\otimes_{S}\ell)\otimes_{\ell} M^{\ell} \lra M^{\ell} \lra
0\, ,
\] 
which is obtained from the exact sequence \eqref{eq:ext2'} by applying
$-\otimes_{\ell} M^{\ell}$. By the preceding claim, the modules in the
middle are projective, and so the element
$\lambda \beta(\eps)+ \mu \beta(\gamma)$ induces a stable isomorphism
\[ 
\Omega^2(M^{\ell}) \xra{\ \sim\ } M^{\ell}\,.
\] 
It follows that $\lambda f + \mu g$ is an isomorphism for all $\lambda, \mu$ in $l$ not both zero. Thus Lemma~\ref{le:kronecker}
applies and yields $\Ext^1_R(k,M)=0$ as desired.
\end{proof}

\subsection*{Support and cosupport detect projectivity} 

The theorem below is the main result of this work. Several
consequences are discussed in the subsequent sections.

\begin{theorem}
\label{th:finite-groups} Let $k$ be a field and $G$ a finite
group. For any $kG$-module $M$, the following conditions are
equivalent.
\begin{enumerate}[\quad\rm(i)]
\item $M$ is projective.
\item $\pisupp_{G}(M)=\varnothing$
\item $\picosupp_{G}(M)=\varnothing$.
\end{enumerate}
\end{theorem}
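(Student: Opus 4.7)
By Lemma~\ref{le:cosupp-k}(i) the implication (i) $\Rightarrow$ (ii), (iii) is already established, so the content of the theorem consists of the two converses. My plan is to combine two ingredients: Chouinard's theorem, which reduces projectivity questions for finite groups to restrictions to elementary abelian $p$-subgroups, and Dade-style detection results on an elementary abelian group---for $\pi$-support, the classical Dade lemma (in the form of \cite{Benson/Carlson/Rickard:1996a}), and for $\pi$-cosupport its analogue Theorem~\ref{th:DadeHom}.

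First I would check that the hypothesis passes to restriction. For any subgroup $E \leq G$, every $\pi$-point of $E$ is automatically a $\pi$-point of $G$ by Remark~\ref{re:pi-basics}(2), and the constructions $(-)_{K}$ and $(-)^{K}$ commute with restriction of scalars along $kE \hookrightarrow kG$. Consequently, $\pisupp_{G}(M) = \varnothing$ forces $\pisupp_{E}(M\downarrow_{E}) = \varnothing$, and similarly for $\pi$-cosupport. Applying Chouinard's theorem then reduces both (ii) $\Rightarrow$ (i) and (iii) $\Rightarrow$ (i) to the case where $G = E = (\bbZ/p)^{r}$, so that $kE \cong R = k[z_{1},\ldots,z_{r}]/(z_{1}^{p},\ldots,z_{r}^{p})$.

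For the elementary abelian case, fix an algebraically closed field $K \supseteq k$ of transcendence degree at least $r - 1$. For (iii) $\Rightarrow$ (i) the decisive step is Theorem~\ref{th:DadeHom}: it suffices to show that $\alpha^{*}(M^{K})$ is projective for every flat $K$-algebra map $\alpha\colon K[t]/(t^{p}) \to R_{K}$. Every such $\alpha$ is a $\pi$-point of $E$ (the codomain $R_{K}$ is the group algebra of the unipotent abelian group scheme $E_{K}$), so $\alpha \sim \alpha_{\fp}$ for $\fp = \sqrt{\Ker H^{*}(\alpha)} \in \Proj H^{*}(E, k)$. The hypothesis $\picosupp_{E}(M) = \varnothing$ then says $\alpha_{\fp}^{*}(M^{K_{\fp}})$ is projective, and the equivalence (iii) $\Leftrightarrow$ (iv) in Theorem~\ref{th:pi-equivalence} transports projectivity to $\alpha^{*}(M^{K})$. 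The implication (ii) $\Rightarrow$ (i) runs along identical lines, substituting the classical Dade lemma for $\pi$-support and using the equivalence (i) $\Leftrightarrow$ (ii) in Theorem~\ref{th:pi-equivalence}.

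The main obstacle is not this final bookkeeping but the Dade-type detection result itself: Theorem~\ref{th:DadeHom}, the cosupport analogue, is the essential new ingredient, and its proof (via the Kronecker-type Lemma~\ref{le:kronecker}, Bocksteins, and induction on rank) is where the real work resides. Once Theorem~\ref{th:DadeHom} is in hand, the deduction of Theorem~\ref{th:finite-groups} is a formal argument built on Chouinard and the equivalence-of-$\pi$-points machinery.
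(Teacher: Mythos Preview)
Your treatment of (iii)$\Rightarrow$(i) matches the paper's: restrict to elementary abelian subgroups via Remark~\ref{re:pi-basics}(2), apply Theorem~\ref{th:DadeHom}, then invoke Chouinard. Where you diverge is in (ii)$\Rightarrow$(i). You run the same Chouinard-plus-Dade argument in parallel, appealing to the classical Dade lemma for $\pi$-support from \cite[Theorem~5.2]{Benson/Carlson/Rickard:1996a}. The paper instead bootstraps from the already-established (iii)$\Rightarrow$(i): if $\pisupp_{G}(M)=\varnothing$, then Theorem~\ref{th:tensor-and-hom-pi}(ii) gives $\picosupp_{G}(\End_{k}(M))=\varnothing$, so $\End_{k}(M)$ is projective by (iii)$\Rightarrow$(i), and then $M$ is projective by Lemma~\ref{le:end-projectivity}(ii). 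Both routes are correct. Your argument is pleasingly symmetric but imports the classical Dade lemma as a separate black box; the paper's argument is more self-contained---it needs only the new cosupport Dade lemma (Theorem~\ref{th:DadeHom}) and derives the support direction from it via the Hom formula, which also advertises the utility of that formula.
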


\begin{proof} We may assume that the characteristic of $k$, say $p$,
divides the order of $G$. The implications (i)$\implies$(ii) and
(i)$\implies$(iii) are by Lemma~\ref{le:cosupp-k}.

(iii)$\implies$(i) Let $E$ be an elementary abelian $p$-subgroup of
$G$ and let $M\da_{E}$ denote $M$ viewed as a $kE$-module. The
hypothesis implies $\picosupp_{E}(M\da_{E})=\varnothing$, by
Remark~\ref{re:pi-basics}(2), and then it follows from
Theorem~\ref{th:DadeHom} that $M\da_{E}$ is projective.  Chouinard's
theorem~\cite[Theorem~1]{Chouinard:1976a} thus implies that $M$ is
projective.

(ii)$\implies$(i) When $\pisupp_{G}(M)=\varnothing$, it follows from
Theorem~\ref{th:tensor-and-hom-pi} that
\[ \picosupp_{G}\End_{k}(M)=\varnothing\,.
\] The already settled implication (iii)$\implies$(i) now yields that
$\End_{k}(M)$ is projective. Hence $M$ is projective, by
Lemma~\ref{le:end-projectivity}.
\end{proof}

\section{Cohomological support and cosupport}
\label{se:applications} 

The final part of this paper is devoted to applications of Theorem~\ref{th:finite-groups}.  We proceed in several steps and derive global results about the module category of a  finite group from local properties, including a comparison of $\pi$-support and 
$\pi$-cosupport with cohomological support and cosupport. In the next section we consider the classification of  thick and localising subcategories of the stable module category.

From now on $G$ denotes a finite group, $k$ a field of positive
characteristic dividing the order of $G$. Let $\StMod(kG)$ be the
stable module category of all (meaning, also infinite dimensional)
$kG$-modules modulo the projectives; see, for example,
\cite[\S2.1]{Benson:1998b}. This is not an abelian category; rather,
it has the structure of a compactly generated tensor triangulated
category and comes equipped with a natural action of the cohomology
ring $H^*(G,k)$; see \cite[Section
10]{Benson/Iyengar/Krause:2008a}. This yields the notion of
cohomological support and cosupport developed in
\cite{\bik:2008a,\bik:2012b}. More precisely, for each homogeneous
prime ideal $\fp$ of $H^*(G,k)$ that is different from the maximal
ideal of positive degree elements, there is a distinguished object
$\gam_\fp k$. Using this one defines for each $kG$-module $M$ its
\emph{cohomological support}
\begin{align*} 
\supp_G(M)
	&=\{\fp\in\Proj H^*(G,k)\mid \text{$\gam_\fp k\otimes_k M$ is not projective} \}\\
\intertext{and its \emph{cohomological cosupport}} 
\cosupp_G(M)
	&=\{\fp\in\Proj H^*(G,k)\mid \text{$\Hom_k(\gam_\fp k,M)$ is not projective}\}.
\end{align*}

The result below reconciles these notions with the corresponding notions defined in terms of $\pi$-points. 

\begin{theorem}
  \label{th:supp-pisupp} 
Let $G$ be a finite group and $M$ a $kG$-module. Then
\[
\cosupp_{G}(M)=\picosupp_{G}(M) \quad\text{and}\quad
\supp_{G}(M)=\pisupp_{G}(M)\,,
\] 
regarded as subsets of $\Proj H^*(G,k)$.
\end{theorem}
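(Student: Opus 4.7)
The plan is to reduce both equalities in the theorem to a single identity, namely
\[
\pisupp_G(\gam_\fp k)=\{\fp\}\quad\text{for every }\fp\in\Proj H^*(G,k),
\]
via the BIK local cohomology objects $\gam_\fp k$ combined with Theorems~\ref{th:finite-groups} and \ref{th:tensor-and-hom-pi}. By the definition of cohomological support, $\fp\in\supp_G(M)$ iff $\gam_\fp k\otimes_k M$ is not projective; Theorem~\ref{th:finite-groups} reformulates this as $\pisupp_G(\gam_\fp k\otimes_k M)\neq\varnothing$, and Theorem~\ref{th:tensor-and-hom-pi}(i) identifies the latter with $\pisupp_G(\gam_\fp k)\cap\pisupp_G(M)$. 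Running the same three-step translation for $\Hom_k(\gam_\fp k,M)$ via Theorem~\ref{th:tensor-and-hom-pi}(ii) gives the analogous statement for $\cosupp_G$. Granted the key identity, both equalities follow at once.

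To establish the key identity I would use standard properties of $\gam_\fp k$ from the BIK framework: it is non-projective (since $\supp_G(\gam_\fp k)=\{\fp\}$), and $\gam_\fp k\otimes_k\gam_\fq k\simeq 0$ in $\StMod(kG)$ whenever $\fp\neq\fq$. Via Theorems~\ref{th:finite-groups} and \ref{th:tensor-and-hom-pi}(i) these translate into the facts that each $\pisupp_G(\gam_\fp k)$ is non-empty and that the sets $\pisupp_G(\gam_\fp k)$ are pairwise disjoint as $\fp$ varies. Combined with the equality $\pisupp_G(N)=\supp_G(N)$ for finite-dimensional $N$ (from Friedlander--Pevtsova), this reduces the whole claim to showing $\fp\in\pisupp_G(\gam_\fp k)$ for every $\fp$: the reverse inclusion is then forced, since any $\fr\in\pisupp_G(\gam_\fp k)$ would also lie in $\pisupp_G(\gam_\fr k)$ and hence, by disjointness, coincide with $\fp$.

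The inclusion $\fp\in\pisupp_G(\gam_\fp k)$ I would prove by induction on $\dim V(\fp)$. Picking a finite-dimensional Koszul-type module $N$ with $\supp_G(N)=V(\fp)$, built from homogeneous generators of $\fp$ up to radical, the tensor $\gam_\fp k\otimes_k N$ has cohomological support $\{\fp\}\cap V(\fp)=\{\fp\}$, hence is non-projective. Theorems~\ref{th:finite-groups} and \ref{th:tensor-and-hom-pi}(i) together with $\pisupp_G(N)=\supp_G(N)$ then produce some $\fs\in\pisupp_G(\gam_\fp k)\cap V(\fp)$. If $\fs\neq\fp$, then $\fs\supsetneq\fp$, so $\dim V(\fs)<\dim V(\fp)$; by the inductive hypothesis $\fs\in\pisupp_G(\gam_\fs k)$, contradicting the disjointness of $\pisupp_G(\gam_\fp k)$ and $\pisupp_G(\gam_\fs k)$. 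Hence $\fs=\fp$, closing the induction.

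The main obstacle is making sure that the background facts about $\gam_\fp k$ used above --- its non-vanishing, its orthogonality pattern $\gam_\fp k\otimes_k\gam_\fq k\simeq 0$ for $\fp\neq\fq$, the tensor identity $\supp_G(\gam_\fp k\otimes_k M)=\{\fp\}\cap\supp_G(M)$, and the existence of finite-dimensional Koszul modules with cohomological support equal to $V(\fp)$ --- are available independently of the classification theorem that this paper is re-proving, so that no circularity enters the induction. Once those inputs are cited from the BIK literature, the opening reduction immediately yields both $\supp_G(M)=\pisupp_G(M)$ and $\cosupp_G(M)=\picosupp_G(M)$.
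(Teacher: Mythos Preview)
Your reduction of both equalities to the single identity $\pisupp_G(\gam_\fp k)=\{\fp\}$, and the subsequent chain of equivalences via Theorems~\ref{th:finite-groups} and~\ref{th:tensor-and-hom-pi}, is exactly the paper's proof. The only difference is that the paper does not re-derive this identity: it simply cites \cite[Proposition~6.6]{Friedlander/Pevtsova:2007a}, where it is established directly.

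Your inductive argument for the identity is correct and non-circular --- the BIK properties of $\gam_\fp k$ you invoke (non-vanishing, orthogonality $\gam_\fp k\otimes_k\gam_\fq k\simeq 0$ for $\fp\neq\fq$, and the formula $\supp_G(\gam_\fp k\otimes_k M)=\{\fp\}\cap\supp_G(M)$), the existence of finite-dimensional modules with prescribed closed support, and the equality $\pisupp_G=\supp_G$ on finite-dimensional modules are all available independently of stratification --- but it is an unnecessary detour once one is willing to quote Friedlander--Pevtsova. Its one modest payoff is that it isolates exactly which $\pi$-point input is needed beyond Theorem~\ref{th:finite-groups}: only the finite-dimensional comparison, not the full computation of $\pisupp_G(\gam_\fp k)$.
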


\begin{proof} 
We use the fact that $\pisupp_{G}(\gam_\fp k) = \{\fp\}$; see  \cite[Proposition~6.6]{Friedlander/Pevtsova:2007a}. 
Then using Theorems~\ref{th:finite-groups} and~\ref{th:tensor-and-hom-pi}\,(ii)  one gets
\begin{align*} 
\fp\in\cosupp_{G}(M) 
	&\stackrel{\rm def}{\Iff} \Hom_k(\gam_\fp k,M) \text{ is not projective}\\ 
	&\stackrel{\ref{th:finite-groups}}{\Iff} \picosupp_{G}(\Hom_k(\gam_\fp k,M))\ne \varnothing \\ 
	&\stackrel{\ref{th:tensor-and-hom-pi}}{\Iff} \pisupp_{G}(\gam_\fp k) \cap \picosupp_{G}(M) \ne \varnothing \\ 
	&\Iff \fp\in\picosupp_{G}(M).
\end{align*} 
This gives the equality involving cosupports.

In the same vein, using Theorems~\ref{th:finite-groups} and~\ref{th:tensor-and-hom-pi}\,(i) one gets
\begin{align*} 
\fp\in\supp_{G}(M) 
	&\stackrel{\rm def}{\Iff} \gam_\fp k \otimes_k M \text{ is not projective}\\ 
	&\stackrel{\ref{th:finite-groups}}{\Iff} \pisupp_{G}(\gam_\fp k \otimes_k M) \ne \varnothing \\ 
	&\stackrel{\ref{th:tensor-and-hom-pi}}{\Iff} \pisupp_{G}(\gam_\fp k) \cap \pisupp_{G}(M) \ne \varnothing \\ 
	& \Iff \fp \in \pisupp_{G}(M).
\end{align*} 
This gives the equality involving supports.
\end{proof}

Here is a first consequence of this result; we are unable to verify it directly, except for closed points in the $\pi$-support and
$\pi$-cosupport.

\begin{corollary}
\label{co:maximal-elements} 
For any $kG$-module $M$ the maximal elements, with respect to inclusion, in $\picosupp_{G}(M)$ and
$\pisupp_{G}(M)$ coincide.
\end{corollary}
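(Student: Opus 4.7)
The plan is to reduce to the analogous statement for cohomological support and cosupport via Theorem~\ref{th:supp-pisupp}, which yields $\pisupp_G(M)=\supp_G(M)$ and $\picosupp_G(M)=\cosupp_G(M)$. The claim then becomes: the maximal elements of $\supp_G(M)$ and $\cosupp_G(M)$ in $\Proj H^*(G,k)$ coincide. This is a general feature of support and cosupport for compactly generated triangulated categories equipped with a central action of a graded commutative noetherian ring, and is proved within the formalism of \cite{\bik:2012b}.

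To describe the underlying mechanism of the reduced statement: suppose $\fp$ is maximal in $\supp_G(M)$. Then the $\fp$-local modification $\gam_\fp k \otimes_k M$ has support exactly $\{\fp\}$ by Theorem~\ref{th:tensor-and-hom-pi}(i), and is non-projective by Theorem~\ref{th:finite-groups}. A local duality step at $\fp$ then shows that $\Hom_k(\gam_\fp k, M)$ is also non-projective; combining this with Theorem~\ref{th:tensor-and-hom-pi}(ii), which reads $\picosupp_G(\Hom_k(\gam_\fp k, M)) = \{\fp\} \cap \picosupp_G(M)$, one concludes $\fp \in \picosupp_G(M)$. The reverse inclusion is symmetric, so the statement about maximal elements follows.

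The main obstacle lies in the local duality step: verifying that at a closed (i.e.\ maximal) point $\fp$ of the support, the two $\fp$-local modifications of $M$---the ``local cohomology'' $\gam_\fp k \otimes_k M$ and its companion $\Hom_k(\gam_\fp k, M)$---are simultaneously zero or nonzero in $\StMod(kG)$. This is precisely the content of the local cohomology/local homology formalism for triangulated categories developed in \cite{\bik:2012b}, and is what underpins the equality of maximal elements of support and cosupport used in the reduction. In the concluding sentence of the corollary the authors remark that, while closed points of $\picosupp_G(M)$ and $\pisupp_G(M)$ can be matched directly from the definition together with Remark~\ref{re:finite-dimensional} and Theorem~\ref{th:tensor-and-hom-pi}, no such elementary argument is available for non-closed maximal points, and this is where the appeal to \cite{\bik:2012b} via Theorem~\ref{th:supp-pisupp} becomes essential.
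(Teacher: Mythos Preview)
Your proposal is correct and follows the same route as the paper: reduce to cohomological support and cosupport via Theorem~\ref{th:supp-pisupp}, then invoke the general result for compactly generated triangulated categories from \cite{\bik:2012b} (specifically Theorem~4.13 there). The paper's proof is the one-line citation; your additional sketch of the local-duality mechanism behind that result is accurate in spirit, though note that showing $\max(\supp)\subseteq\cosupp$ and $\max(\cosupp)\subseteq\supp$ alone does not quite yield equality of maximal elements without a further observation (e.g.\ that each set is contained in the specialisation closure of the other), which is part of what the cited theorem packages.
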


\begin{proof} 
Given Theorem~\ref{th:supp-pisupp}, this follows from \cite[Theorem~4.13]{\bik:2012b}.
\end{proof}

We continue with two useful formulas for computing cohomological
supports and cosupports; they are known from previous work
\cite{Benson/Carlson/Rickard:1996a, \bik:2012b} and are now accessible
from the perspective of $\pi$-points.

\begin{corollary}
\label{co:tensor-and-hom}
For all $kG$-modules $M$ and $N$ there are equalities
\begin{enumerate}
\item[\rm (i)] $\supp_{G}(M \otimes_k N) = \supp_{G}(M) \cap
\supp_{G}(N)$.
\item[\rm (ii)] $\cosupp_{G}(\Hom_k(M,N))= \supp_{G}(M) \cap
\cosupp_{G}(N)$.
\end{enumerate}
\end{corollary}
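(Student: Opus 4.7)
The plan is to obtain both equalities as immediate consequences of the two main ingredients already in place: Theorem~\ref{th:supp-pisupp}, which identifies cohomological support and cosupport with their $\pi$-analogues, and Theorem~\ref{th:tensor-and-hom-pi}, which records the tensor and hom formulas on the $\pi$-side. There is essentially no obstacle; this is a routine translation.

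For part (i), I would start from the left-hand side, apply Theorem~\ref{th:supp-pisupp} to replace $\supp_{G}(M\otimes_{k}N)$ by $\pisupp_{G}(M\otimes_{k}N)$, invoke Theorem~\ref{th:tensor-and-hom-pi}(i) to rewrite this as $\pisupp_{G}(M)\cap\pisupp_{G}(N)$, and then apply Theorem~\ref{th:supp-pisupp} once more to each factor to recover $\supp_{G}(M)\cap\supp_{G}(N)$.

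For part (ii), I would proceed in the identical way: Theorem~\ref{th:supp-pisupp} reduces $\cosupp_{G}(\Hom_k(M,N))$ to $\picosupp_{G}(\Hom_k(M,N))$, then Theorem~\ref{th:tensor-and-hom-pi}(ii) gives $\pisupp_{G}(M)\cap\picosupp_{G}(N)$, and a final application of Theorem~\ref{th:supp-pisupp} yields $\supp_{G}(M)\cap\cosupp_{G}(N)$.

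The only step worth mentioning as potentially non-trivial is that Theorem~\ref{th:supp-pisupp} was established for finite groups (using Theorem~\ref{th:finite-groups}), and both $M\otimes_{k}N$ and $\Hom_k(M,N)$ are genuine $kG$-modules to which it applies without any finite-dimensionality hypothesis; so no separate argument is needed to pass through the identification. In short, the proof is simply a display of two chains of equalities, each of length three, with alternating justifications drawn from Theorems~\ref{th:supp-pisupp} and~\ref{th:tensor-and-hom-pi}.
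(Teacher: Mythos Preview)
Your proposal is correct and matches the paper's own proof, which simply says that the result follows from Theorems~\ref{th:tensor-and-hom-pi} and~\ref{th:supp-pisupp}. You have just spelled out the chain of equalities that this one-line justification encodes.
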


\begin{proof} This follows from Theorems~\ref{th:tensor-and-hom-pi}
and \ref{th:supp-pisupp}.
\end{proof}

We wrap up this section with a couple of examples. The first one 
shows that the $\pi$-support of a module $M$ may be properly contained
in that of $\End_k(M)$.

\begin{example}
\label{ex:klein3} 
Let $V$ be the Klein four group $\bbZ/2 \times \bbZ/2$ and $k$ a field 
of characteristic two. Thus, $\Proj H^*(V,k)=\bbP^{1}_{k}$, and a realisation 
of points of $\bbP^{1}_{k}$ as $\pi$-points of $kV$ was given in Example~\ref{ex:klein}. 
Let $M$ be the infinite dimensional $kV$-module described in Example~\ref{ex:klein2}. 
As noted there, the $\pi$-support of $M$ consists of a single point, namely, the closed 
point $[0,1]$. We claim
\[ 
\pisupp_{V}(\End_{k}(M)) = \{[0,1]\}\cup\{\text{generic point of $\bbP^{1}_{k}$}\}\,.
\] 
Indeed, since the $\pi$-cosupport of $M$ contains $[0,1]$, by Example~\ref{ex:klein2}, 
it follows from Theorem~\ref{th:tensor-and-hom-pi} that the $\pi$-cosupport of $\End_{k}(M)$ 
is exactly $\{[0,1]\}$. Corollary~\ref{co:maximal-elements} then implies that $[0,1]$ is 
the only closed point in the $\pi$-support of $\End_{k}(M)$. It remains to verify that the 
latter contains also the generic point.

Let $K$ be the field of rational functions in a variable $s$. The
$\pi$-point defined by $K[t]/(t^{p})\to K[x,y]/(x^{2},y^{2})$ with
$t\mapsto x+sy$ corresponds to the generic point of $\bbP^{1}_{k}$;
see Example~\ref{ex:klein}. The desired result follows once we verify
that the element $1 \otimes \id^{M}$ of $\End_{k}(M)_{K}$ is in the
kernel of $x+sy$ but not in its image. It is in the kernel because
$\id^{M}$ is a $kV$-module homomorphism. Suppose there exists an $f$
in $\End_{k}(M)_{K}$ with $(x+sy)f =1\otimes \id^{M}$. Then for each
$n\ge 0$, the identity $((x+sy)f)(u_n)=u_n$ yields
\[ 
f(v_{n}) + sf(v_{n-1}) = u_{n} + (x+sy)f(u_{n})\,.
\] 
Noting that $v_{-1}=0$, by convention, it follows that
\[ 
f(v_{n}) \equiv u_{n} + su_{n-1} + \cdots + s^{n}u_{0}
\] 
modulo the submodule $K(v_{0},v_{1},\cdots)$ of $M_{K}$. This cannot be, 
as $f$ is in $\End_{k}(M)_{K}$.
\end{example}

In Example~\ref{ex:klein2} it is proved that, for $M$ as above,
$\pisupp_V(M)\neq\picosupp_V(M)$. The remark below is a conceptual
explanation of this phenomenon, since $M$ is of the form $T(I_\fp)$
for $\fp=[0,1]$ in $\bbP^{1}_{k}$.

\begin{example}
\label{ex:T(I)} Given $\fp\in\Proj H^*(G,k)$, there is a $kG$-module
$T(I_\fp)$ which is defined in terms of the following natural
isomorphism
\[ 
\Hom_{H^*(G,k)}(\widehat H^*(G,-),I_\fp)\cong\sHom_{kG}(-,T(I_\fp))
\] 
where $I_\fp$ denotes the injective envelope of $H^*(G,k)/\fp$, $\widehat H^*(G,-)$ is Tate cohomology, and $\sHom_{kG}(-,-)$ is the set of 
homomorphisms in $\StMod kG$; see \cite[\S3]{Benson/Krause:2002a}.  The cohomological support and cosupport of this module have been computed in \cite[Lemma~11.10]{\bik:2011b} and \cite[Proposition~5.4]{\bik:2012b}, respectively. Combining this with Theorem~\ref{th:supp-pisupp} gives
\[ 
\pisupp_G(T(I_\fp))=\{\fp\}\quad\text{and}\quad
\picosupp_G(T(I_\fp))=\{\fq\in\Proj H^*(G,k)\mid\fq\subseteq\fp\}\,.
\]
\end{example}

\section{Stratification}
\label{se:stratification}
The results of this section concern the triangulated category structure of the stable module category, $\StMod(kG)$. Recall that a full subcategory $\sfC$ of $\StMod(kG)$ is \emph{localising} if it is a triangulated subcategory and is closed under arbitrary direct sums.
In a different vein, $\sfC$ is  \emph{tensor ideal} if for all $C$ in $\sfC$ and arbitrary $M$, the $kG$-module $C\otimes_{k}M$ is in $\sfC$. 

Following \cite[\S3]{\bik:2011b}, we say that the stable module category $\StMod(kG)$ is \emph{stratified} by $H^*(G,k)$ if for each
homogeneous prime ideal $\fp$ of $H^*(G,k)$ that is different from the maximal ideal of positive degree elements the localising subcategory
\[
\{M\in\StMod(kG)\mid\supp_G(M)\subseteq\{\fp\}\}
\]
admits no proper non-zero tensor ideal localising subcategory.

We are now in the position to give a simplified proof of \cite[Theorem~10.3]{Benson/Iyengar/Krause:2011b}. We refer the reader to \cite[Introduction]{\bik:2011b} for a version of this result dealing entirely with the (abelian) category of $kG$-modules.

\begin{theorem}
\label{th:stratification}
  Let $k$ be a field and $G$ a finite group. Then the stable module
  category $\StMod(kG)$ is stratified as a tensor triangulated
  category by the natural action of the cohomology ring $H^*(G,k)$.
  Therefore the assignment
  \begin{equation}\label{eq:supp}
    \sfC\longmapsto  \bigcup_{M\in\sfC}\supp_G(M)
\end{equation} 
induces a one to one correspondence between the tensor ideal localising subcategories of $\StMod(kG)$ and the subsets of $\Proj H^*(G, k)$.
\end{theorem}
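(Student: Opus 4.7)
The plan is to invoke the general stratification framework of \cite{\bik:2011b}, which reduces the classification of tensor ideal localising subcategories to two ingredients: the \emph{local-global principle} for the action of $H^*(G,k)$ on $\StMod(kG)$, and the \emph{minimality} of each local subcategory
\[
\gam_\fp(\StMod(kG)):=\{M\in\StMod(kG)\mid\supp_G(M)\subseteq\{\fp\}\}.
\]
Once both are in hand, stratification holds, and the correspondence \eqref{eq:supp} follows from the general classification theorem in \cite{\bik:2011b}.

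First, the local-global principle is automatic: the ring $H^*(G,k)$ is Noetherian by the theorem of Evens and Venkov (or of Friedlander--Suslin), and this is enough to guarantee the local-global principle for its action on $\StMod(kG)$ by \cite[Theorem~3.6]{\bik:2011b}.

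The substantive step is to verify minimality at each $\fp \in \Proj H^*(G,k)$. By the criterion of \cite[Corollary~3.9]{\bik:2011b}, minimality of $\gam_\fp(\StMod(kG))$ is equivalent to the assertion that $\sHom_{kG}(M,N)\neq 0$ whenever $M$ and $N$ are non-zero objects of this subcategory. Given such $M$ and $N$, neither is projective, so Theorems~\ref{th:finite-groups} and~\ref{th:supp-pisupp} force $\supp_G(M)=\supp_G(N)=\{\fp\}$. By Corollary~\ref{co:maximal-elements} applied to $N$, the point $\fp$ lies in $\cosupp_G(N)$. Applying Corollary~\ref{co:tensor-and-hom}(ii),
\[
\cosupp_G(\Hom_k(M,N))=\supp_G(M)\cap\cosupp_G(N)=\{\fp\}\cap\cosupp_G(N)=\{\fp\}\neq\varnothing,
\]
so $\Hom_k(M,N)$ is non-projective by Theorems~\ref{th:finite-groups} and~\ref{th:supp-pisupp}. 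This is exactly the statement that $\sHom_{kG}(M,N)\neq 0$ in the stable category, verifying minimality.

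The main conceptual obstacle has already been absorbed by the earlier sections: the hard technical input is the $\pi$-cosupport detection theorem (Theorem~\ref{th:finite-groups}), together with its companion formula (Corollary~\ref{co:tensor-and-hom}(ii)); once these are available, the passage to stratification is a formal consequence of the general machinery. The delicate point to check, were one to unpack the black box of \cite{\bik:2011b}, is that the abstract minimality criterion genuinely reduces to non-vanishing of $\sHom_{kG}$ for any two non-zero objects of the local subcategory, which requires identifying $\gam_\fp(\StMod(kG))$ with the image of the local cohomology functor $\gam_\fp$ on $\StMod(kG)$ and tracking how the action of $H^*(G,k)$ interacts with morphisms and tensor products. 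This identification is routine given that all the relevant functors are compatible with arbitrary coproducts and that $H^*(G,k)$ is Noetherian.
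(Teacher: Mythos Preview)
Your argument follows the paper's route: reduce stratification to minimality at each $\fp$ via \cite[Lemma~3.9]{\bik:2011b}, then verify it by showing $\Hom_k(M,N)$ is non-projective through Corollary~\ref{co:tensor-and-hom}(ii). The paper obtains $\fp\in\cosupp_G(N)$ by applying Corollary~\ref{co:tensor-and-hom}(ii) to $\End_k(N)$ together with Lemma~\ref{le:end-projectivity}, rather than via Corollary~\ref{co:maximal-elements}; your route is equally valid.

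One point needs correction. The tensor-ideal minimality criterion of \cite[Lemma~3.9]{\bik:2011b} asks that the \emph{functor} $\sHom_{kG}(M\otimes_k -,N)$ be nonzero, which by adjunction is equivalent to the internal function object $\Hom_k(M,N)$ being non-projective; it is not the statement that the single Hom-set $\sHom_{kG}(M,N)$ is nonzero. Your final sentence conflates these: $\Hom_k(M,N)$ non-projective means $\widehat H^{*}(G,\Hom_k(M,N))\ne 0$ in \emph{some} degree, which need not be degree zero. Since what you actually established is precisely that $\Hom_k(M,N)$ is non-projective, the argument is correct in substance once the criterion is stated properly.
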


\begin{proof}
It suffices to show that $\sHom_{kG}(M\otimes_k -,N)\neq  0$ whenever $M,N$ are $kG$-modules with $\supp_G(M)=\{\fp\}=\supp_G(N)$; see \cite[Lemma~3.9]{\bik:2011b}. By adjointness, this is equivalent to $\Hom_k(M,N)$ being non-projective. Thus the assertion follows from Corollary~\ref{co:tensor-and-hom}, once we observe that $\supp_G(N)=\{\fp\}$ implies $\fp\in\cosupp_G(N)$. But this is again a consequence of Corollary~\ref{co:tensor-and-hom}, since $\End_k(N)$ is non-projective by Lemma~\ref{le:end-projectivity}.

The second part of the assertion is a formal consequence of the first; see \cite[Theorem~3.8]{Benson/Iyengar/Krause:2011b}. The inverse map sends a subset $\mcV$ of $\Proj H^*(G, k)$ to the subcategory consisting of all $kG$-modules $M$ such that $\supp_G(M)\subseteq\mcV$.
\end{proof}

The next results concern $\stmod(kG)$, the full subcategory of  $\StMod G$ consisting of finite dimensional modules. A \emph{tensor ideal thick subcategory} $\sfC$ of $\stmod(kG)$ is a triangulated subcategory that is closed under direct summands and has the property that for any $C$ in $\sfC$ and finite dimensional $kG$-module $M$, the $kG$-module $C\otimes_{k}M$ is in $\sfC$.  The classification of the tensor ideal thick subcategories of $\stmod (kG)$ is the main result of \cite{Benson/Carlson/Rickard:1997a} and can be deduced from the classification of the tensor ideal localising subcategories of $\StMod (kG)$. This is based on the following lemma.

\begin{lemma}
\label{le:thick}
Let $M$ be a finite dimensional $kG$-module. Then $\supp_G(M)$ is a Zariski-closed subset of $\Proj H^*(G,k)$. Conversely, each Zariski-closed subset of $\Proj H^*(G,k)$ is of this form.
\end{lemma}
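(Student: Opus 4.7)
The proof naturally splits into the two implications. For the forward direction, my plan is to use Theorem~\ref{th:supp-pisupp} to replace $\supp_G(M)$ by $\pisupp_G(M)$, and then to invoke the standard identification of the latter, for finite dimensional $M$, with the cohomological variety
\[
V_G(M) = \{\fp\in\Proj H^*(G,k) \mid \fp\supseteq\operatorname{ann}_{H^*(G,k)}\Ext_{kG}^{*}(M,M)\},
\]
as recorded in \cite{Friedlander/Pevtsova:2007a}. Since $\Ext_{kG}^{*}(M,M)$ is finitely generated over the Noetherian ring $H^*(G,k)$ when $M$ is finite dimensional, its annihilator is a homogeneous ideal, and so $V_G(M)$ is Zariski-closed in $\Proj H^*(G,k)$ by construction.

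For the converse, I would realise any prescribed Zariski-closed subset as the support of a suitable tensor product of Carlson modules. Given a closed subset $\mcV=V(I)\subseteq \Proj H^*(G,k)$, choose finitely many homogeneous generators $\zeta_1,\ldots,\zeta_r$ of $I$; this is possible because $H^*(G,k)$ is Noetherian. For each homogeneous element $\zeta\in H^{n}(G,k)=\sHom_{kG}(\Omega^{n}k, k)$, form the Carlson module $L_\zeta$ via the short exact sequence
\[
0\lra L_\zeta \lra \Omega^{n}k \lra k \lra 0
\]
in which the right-hand map represents $\zeta$. Applying $\gam_\fp k\otimes_k-$ to this sequence and using that multiplication by $\zeta$ on $\gam_\fp k$ is invertible precisely when $\fp\not\in V(\zeta)$ gives the classical computation $\supp_G(L_\zeta)=V(\zeta)\cap\Proj H^*(G,k)$.

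Finally, set $M=L_{\zeta_1}\otimes_k\cdots\otimes_k L_{\zeta_r}$, which is finite dimensional. An iterated application of the tensor-product formula in Corollary~\ref{co:tensor-and-hom}(i) then yields
\[
\supp_G(M)=\bigcap_{i=1}^r \supp_G(L_{\zeta_i})=\bigcap_{i=1}^r V(\zeta_i)=V(I)=\mcV,
\]
as required. The one substantive input is in the forward direction: the identification of $\pisupp_G(M)$ with $V_G(M)$ for finite dimensional $M$ is imported from \cite{Friedlander/Pevtsova:2007a} rather than reproved. Everything else is a bookkeeping combination of the tensor-product formula established earlier in the paper with the classical Carlson module construction, so I do not expect any essential new difficulty beyond citing the appropriate results.
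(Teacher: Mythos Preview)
Your proposal is correct, but it takes a different route from the paper's own proof, which consists entirely of two citations: the forward direction is \cite[Theorem~5.5]{Benson/Iyengar/Krause:2008a} (cohomological support of a compact object is closed), and the converse is \cite[Lemma~2.6]{Benson/Iyengar/Krause:2011a} (every closed set is the support of a compact object). Your argument instead routes the forward direction through Theorem~\ref{th:supp-pisupp} and the identification of $\pi$-support with the classical cohomological variety from \cite{Friedlander/Pevtsova:2007a}, and for the converse you explicitly reproduce the Carlson module construction together with the tensor formula of Corollary~\ref{co:tensor-and-hom}(i). The mathematical content underlying both approaches is essentially the same---the cited \cite[Lemma~2.6]{Benson/Iyengar/Krause:2011a} is itself proved via Koszul/Carlson objects---so what you have done is unpack the citations in the specific setting of $\StMod(kG)$. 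One minor observation: your forward direction invokes Theorem~\ref{th:supp-pisupp}, which rests on the main detection theorem of the paper, whereas the paper's citation to \cite{Benson/Iyengar/Krause:2008a} is independent of that; since Lemma~\ref{le:thick} is only used downstream in Theorem~\ref{th:thick}, this introduces no circularity, but it does mean your proof leans on heavier machinery than strictly necessary.
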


\begin{proof}
The first statement follows from \cite[Theorem~5.5]{\bik:2008a} and the second from \cite[Lemma~2.6]{\bik:2011a}.
\end{proof}

\begin{theorem}
\label{th:thick}
Let $G$ be a finite group and $k$ a field.  Then the assignment \eqref{eq:supp} induces a one to one correspondence between the tensor ideal thick subcategories of $\stmod(kG)$ and the specialisation closed subsets of $\Proj H^*(G,k)$.
\end{theorem}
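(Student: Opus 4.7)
The plan is to deduce Theorem~\ref{th:thick} from Theorem~\ref{th:stratification} by restricting the bijection there to the compact objects of $\StMod(kG)$. Recall that $\stmod(kG)$ is exactly the subcategory of compact objects in $\StMod(kG)$, so passage between tensor ideal thick subcategories of $\stmod(kG)$ and compactly generated tensor ideal localising subcategories of $\StMod(kG)$ should give the desired classification.

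First I would check that the assignment \eqref{eq:supp} is well-defined on tensor ideal thick subcategories, with image in the specialisation closed subsets. Indeed, for $\sfC\subseteq\stmod(kG)$ tensor ideal thick, every $M\in\sfC$ is finite dimensional, so $\supp_G(M)$ is Zariski closed by Lemma~\ref{le:thick}; hence $\bigcup_{M\in\sfC}\supp_G(M)$ is a union of closed subsets, that is, specialisation closed.

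For surjectivity, given a specialisation closed subset $\mcV\subseteq\Proj H^*(G,k)$, write $\mcV=\bigcup_\alpha \mcV_\alpha$ as a union of Zariski-closed subsets. Using the converse half of Lemma~\ref{le:thick}, pick for each $\alpha$ a finite dimensional module $M_\alpha$ with $\supp_G(M_\alpha)=\mcV_\alpha$, and let $\sfC$ be the tensor ideal thick subcategory of $\stmod(kG)$ generated by the family $\{M_\alpha\}$. Using Corollary~\ref{co:tensor-and-hom}(i), and the fact that support is stable under cones, shifts and direct summands, one verifies that the support of $\sfC$ is exactly $\mcV$.

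For injectivity, suppose $\sfC,\sfC'$ are tensor ideal thick subcategories of $\stmod(kG)$ with the same support $\mcV$. Let $\Loc^{\otimes}(\sfC)$ denote the smallest tensor ideal localising subcategory of $\StMod(kG)$ containing $\sfC$, and similarly for $\sfC'$. Since support is compatible with direct sums, cones, tensor products with arbitrary modules, and summands, the support of $\Loc^{\otimes}(\sfC)$ equals that of $\sfC$, so $\Loc^{\otimes}(\sfC)$ and $\Loc^{\otimes}(\sfC')$ have the same support $\mcV$. Theorem~\ref{th:stratification} then gives $\Loc^{\otimes}(\sfC)=\Loc^{\otimes}(\sfC')$. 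Finally, by Neeman's theorem on compactly generated triangulated categories, the thick subcategory of compact objects in a compactly generated localising subcategory is recovered as the set of generators up to thick closure, so $\Loc^{\otimes}(\sfC)\cap\stmod(kG)=\sfC$ and likewise for $\sfC'$; therefore $\sfC=\sfC'$.

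The main obstacle is the injectivity step: one must know that $\stmod(kG)$ really is the subcategory of compact objects and that a tensor ideal thick subcategory of compacts is recovered as the compact part of the tensor ideal localising subcategory it generates. Once these inputs are in place the argument is formal, and the substantive new content is entirely concentrated in Theorem~\ref{th:stratification} together with the two halves of Lemma~\ref{le:thick}.
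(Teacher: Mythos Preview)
Your proposal is correct and follows essentially the same strategy as the paper: both arguments rest on Lemma~\ref{le:thick} for well-definedness and surjectivity, on Theorem~\ref{th:stratification} to pass to localising subcategories of $\StMod(kG)$, and on Neeman's result \cite[Lemma~2.2]{Neeman:1992a} to descend back to compacts. The only cosmetic difference is that the paper packages the argument as constructing an explicit inverse $\tau(\mcV)=\{M\in\stmod(kG)\mid\supp_G(M)\subseteq\mcV\}$ and checking $\sigma\tau=\id$ and $\tau\sigma=\id$, whereas you check surjectivity and injectivity separately; in particular the paper's verification of $\tau\sigma(\sfC)=\sfC$ is exactly your injectivity step, and your identification of the ``main obstacle'' (recovering $\sfC$ as $\Loc^{\otimes}(\sfC)\cap\stmod(kG)$) is precisely where the paper invokes Neeman.
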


\begin{proof}
Let $\sigma$ be the assignment \eqref{eq:supp}.  By Lemma~\ref{le:thick}, when $\sfC$ is a tensor ideal thick subcategory of $\stmod(kG)$, the  subset $\sigma(\sfC)$ of $\Proj H^{*}(G,k)$ is a union of Zarksi-closed subsets, and hence specialisation closed. Thus $\sigma$ restricted to $\stmod G$ has the desired image. Let $\tau$ be the map from specialisation closed subsets of $\Proj H^{*}(G,k)$ to $\stmod G$ that assigns $\mcV$ to the subcategory with objects
\[
\{M\in \stmod G\mid \supp_{G}M\subseteq \mcV\}\,.
\]
This is readily seen to be a tensor ideal thick subcategory. The claim is that $\sigma$ and $\tau$ are inverses of each other.

Indeed for any  $\mcV\subseteq \Proj H^{*}(G,k)$ there is an inclusion $\sigma\tau(\mcV)\subseteq \mcV$; equality holds if $\mcV$ is closed, by Lemma~\ref{le:thick}, and hence also if $\mcV$ is specialisation closed.

Fix a tensor ideal thick subcategory $\sfC$ of $\stmod G$. Evidently, there is an inclusion $\sfC\subseteq \tau\sigma(\sfC)$. To prove that equality holds, it suffices to prove that if $M$ is a finite dimensional $G$-module with $\supp_{G}(M)\subseteq \sigma(\sfC)$, then $M$ is in $\sfC$. Let $\sfC'$ be the tensor ideal localising subcategory of $\StMod (kG)$ generated by $\sfC$. From the properties of support, it is easy to verify that $\sigma(\sfC')=\sigma(\sfC)$, and then Theorem~\ref{th:stratification} implies $M$ is in $\sfC'$. Since $M$ is compact when viewed as an object in $\StMod G$, it follows by an argument analogous to the proof of \cite[Lemma~2.2]{Neeman:1992a} that $M$ is in $\sfC$, as desired.
\end{proof}

\begin{ack} 
Part of this article is based on work supported by the National Science Foundation under Grant No.\,0932078000, while DB, SBI, and HK were in residence at the Mathematical Sciences Research Institute in Berkeley, California, during the 2012--2013 Special Year in Commutative Algebra. The authors thank the Centre de Recerca Matem\`atica, Barcelona, for hospitality during a visit in April 2015 that turned out to be productive and pleasant. SBI and JP were partly supported by NSF grants DMS-1503044 and DMS-0953011,  respectively. We are grateful to Eric Friedlander for comments on an earlier version of this paper.
\end{ack}

\bibliographystyle{amsplain}
\newcommand{\noopsort}[1]{}
\providecommand{\MR}{\relax\ifhmode\unskip\space\fi MR }
\providecommand{\MRhref}[2]{%
  \href{http://www.ams.org/mathscinet-getitem?mr=#1}{#2}}
\providecommand{\href}[2]{#2}

\end{document}